\newcommand{\ignore}[1]{}
\def\R{\mathbb{R}}
\def\Q{\mathbb{Q}}
\def\normal{\mathbf{n}}
\def\bp{\mathbf{p}}
\def\bq{\mathbf{q}}
\newtheorem{theorem}{Theorem}
\newtheorem{defi}{Definition}
\newtheorem{prop}{Proposition}
\newtheorem{remark}{Remark}
\newcommand{\partition}{\mathcal{T}_h}
\title{A structure-preserving local discontinuous Galerkin
method for the Fokker-Planck-Landau equation}
\author{Kun Huang\thanks{Institute for Fusion Studies, University of Texas at Austin, Austin, TX 78712, USA, (k\_huang@utexas.edu).}\and Andr\'es Galindo-Olarte\thanks{Oden Institute for Computational Sciences and Engineering, University of Texas at Austin, Austin, TX 78712, USA, (afgalindo@utexas.edu).}\and Rodrigo Gonz\'alez-Hern\'andez\thanks{Oden Institute for Computational Sciences and Engineering, University of Texas at Austin, Austin, TX 78712, USA, (rodrigogonzalez@utexas.edu).} \and Irene M. Gamba\thanks{Department of Mathematics and Oden Institute for Computational Sciences and Engineering, University of Texas at Austin, Austin, TX 78712, USA, (gamba@math.utexas.edu).}}
\date{\today}
\begin{document}
\maketitle

\begin{abstract}
In this work, we introduce a structure-preserving local discontinuous Galerkin (LDG) method \cite{cockburn1998local} for solving the non-local non-linear  Fokker-Planck-Landau (FPL) equations. We rephrase the structure-preserving strategy of Shiroto and Sentoku\cite{shiroto2019structure} in the language of numerical analysis, and extend it to the LDG framework. We propose a method that is not only conservative, but also stabilized through upwind flux. The apparent contradiction between conservation laws and numerical stabilization is elegantly resolved by leveraging the properties of the jump terms inherent to the LDG framework. In the numerical experiments, our scheme is tested with benchmark examples.
\end{abstract}

\section{Introduction}\label{sec:introduction}
In this work, we consider the Fokker–Planck–Landau (FPL) equation. Developed by Lev Landau in 1936, this model introduced a correction to the Boltzmann equation designed to describe dilute hot plasmas, where charged particles interact via long-range Coulomb forces \cite{hinton1983collisional, lifshitz1981course}. Several variants of the FPL collisional operator have since emerged, by taking the grazing limit of Boltzmann equations for different interaction potentials. Moreover, when particle velocities approach the speed of light, relativistic effects become significant, and a relativistic version of the FPL equation was derived by Budker and Beliaev in 1956 \cite{beliaev1956relativistic, belyaev1961boltzmann, 1956SPhD....1..218B}. Simplified models such as the Dougherty operator\cite{dougherty1964model} and the linear Landau operator have also been extensively used in numerical simulations. In general, the FPL collision operator introduces dissipation that drives the system toward thermodynamic equilibrium.

The FPL equation has applications across multiple areas in science and engineering, including controlled fusion, solar wind and radiation belt. In particular, the study of runaway electrons in tokamaks necessitates numerical simulation based on the relativistic version\cite{dreicer1959electron}.

There are several challenges\cite{dimarco2014numerical} associated with numerically solving the Fokker-Planck-Landau (FPL) equation: high dimensionality of the phase space, non-locality and non-linearity of the collision operator, the need to preserve physical invariants (mass, momentum, and energy), positivity, numerical stability and the decay of entropy. It is generally impractical to design a single numerical method that resolves all of the aforementioned difficulties; practical schemes typically involve compromises tailored to the problem at hand.

A wide variety of deterministic numerical methods have been developed for the FPL equation. Spectral methods\cite{zhang2017conservative, pennie2019decay, pennie2019entropy, pennie2020convergence, dimarco2015numerical} benefit from low time complexity due to their use of the Fast Fourier Transform and the special convolution structure of the non-relativistic collision operator. Particle method by Carrillo et.al.\cite{carrillo2020particle} retains conservation and entropy decay by adopting a regularized collision operator. Hirvijoki et al. \cite{hirvijoki2017conservative} proposed a conservative FEM/DG scheme for by using a basis capable of exactly representing second-order polynomials. Daniel et al. \cite{daniel2020fully} proposed an implicit conservative finite difference scheme based on the Fokker–Planck–Rosenbluth formulation. Degond et al. \cite{degond1994entropy} used the logarithmic form of the equation to construct an entropy scheme. Other techniques include sublattice multigrid and Monte Carlo methods \cite{buet1997fast}, as well as finite volume schemes \cite{taitano2015mass}. For the relativistic FPL equation, Shiroto et al. \cite{shiroto2019structure} proposed a structure-preserving finite difference discretization.

In this paper we propose the use of local discontinuous Galerkin (LDG) method. Introduced in \cite{cockburn1998local} and further developed in \cite{cockburn1999some, castillo2000priori, cockburn2001superconvergence, arnold2002unified, cockburn2001runge}, it is well-suited for solving kinetic equations, as it readily supports hp-adaptivity, allows for easy implementation of stabilization techniques, and is naturally amenable to parallelization. Despite preceding works on conservative DG solvers\cite{shiroto2022mass, kim2022nonlinear}, our solver is unique in its introduction of \emph{structure-preserving} techniques. 

Structure-preserving schemes are conservative, but to be precise, the two concepts are not equivalent. Conservation can be enforced through various means—for instance, by introducing a correction step\cite{zhang2017conservative,daniel2020fully} that projects the solution back onto the conservative subspace, or by enlarging the test space\cite{shiroto2022mass,hirvijoki2017conservative} to explicitly include momentum and energy. However, such approaches do not necessarily preserve the underlying structure of the collision operator. Specifically, this structure refers to the fact that relative velocity always lies in the null space of integral kernel, which will be addressed in detail later.


The paper is organized as follows. Section \ref{sec:background} introduces FPL equations and their properties. In Section \ref{sec:num_methods}, we introduce the LDG method and the discrete gradient, along with relevant notations. The structure-preserving strategy is discussed in Section \ref{sec:conservation_strategy}. Section \ref{sec:implement} presents details associated to implementation. In Section \ref{sec:numerics}, we show numerical results that validate our conservation properties. Finally, Section \ref{sec:conclude} provides concluding remarks and outlines future directions.

\section{The Fokker-Planck-Landau Equations} \label{sec:background}

The Fokker-Planck-Landau equation has the following structure (we keep the notation consistent with Strain and Taskovic\cite{strain2019entropy})
\begin{equation}
		\partial_{t}f=\nabla_{p}\cdot\left(\int_{\R^3}\Phi(\bp,\bq)\cdot\left(f(\bq)\nabla_{p}f(\bp)-f(\bp)\nabla_{q}f(\bq)\right)\,d\bq \right).\label{eqn:fpl_mal}
\end{equation}
where $\Phi$ is a collision kernel determined by the interaction potential. If we let the single particle kinetic energy be $\mathcal{E}(\bp)$,

\begin{equation}
    \mathcal{E}(\bp) \coloneqq \left\{
    \begin{aligned}
        &\mathbf{p}^{2}/2,&\text{non-relativistic},\\
        &\sqrt{1+\mathbf{p}^{2}},&\text{relativistic}.
    \end{aligned}
    \right.
\end{equation}
this collision kernel can be written as the product of a scalar field $\Lambda$ and a tensor field $\mathbb{S}$:
\begin{equation*}
    \Phi(\bp,\bq)=\Lambda\left(\bp,\bq\right)\mathbb{S}\left(\nabla_{p}\mathcal{E}(\bp),\nabla_{q}\mathcal{E}(\bq)\right).
\end{equation*}

The tensor field $\mathbb{S}(\mathbf{v},\mathbf{w})$ depends on velocities $\mathbf{v},\mathbf{w} \in \mathbb{R}^3$. Denoting the relative velocity as $\mathbf{u}\coloneqq\mathbf{v}-\mathbf{w}$, and the mean velocity as $\mathbf{z}\coloneqq\frac{\mathbf{v}+\mathbf{w}}{2}$, then we consider tensors $\mathbb{S}$ of the following form,
\begin{equation}\label{eq:tensorS}
    \mathbb{S}(\mathbf{v},\mathbf{w}) \coloneqq \left\{
    \begin{aligned}
        &\mathbf{u}\cdot\mathbf{u}-\mathbf{u}\otimes\mathbf{u},&\text{non-relativistic},\\
        &\mathbf{u}\cdot\mathbf{u}-\mathbf{u}\otimes\mathbf{u}-\left(\mathbf{z}\times\mathbf{u}\right)\otimes\left(\mathbf{z}\times\mathbf{u}\right),&\text{relativistic}.
    \end{aligned}
    \right.
\end{equation}
This tensor is the key to energy conservation and entropy decay. Indeed, by testing against a suitable test function $\varphi(\mathbf{p})$ we obtain the weak form,
\begin{equation}\label{eq:weaklandau}
		\int_{\R^3}\varphi\partial_{t}f \,d \bp =-\int_{\R^3}\int_{\R^3}\Phi(\bp,\bq):\left[\nabla_{p}\varphi(\bp)\otimes\left(f(\bq)\nabla_{p}f(\bp)-f(\bp)\nabla_{q}f(\bq)\right)\right] \,d\bq \,d\bp.
\end{equation}
Since the collision kernel is symmetric, i.e. $\Phi(\bp,\bq)=\Phi(\bq,\bp)$, it can be rewritten as:
\begin{equation}\label{eq:weaklandau_structure}
    \int_{\mathbb{R}^3}\varphi\partial_{t}f \,d \bp =-\int_{\mathbb{R}^3}\int_{\mathbb{R}^3}\Phi(\bp,\bq):\left[\left(\nabla_{q}\varphi(\bq)-\nabla_{p}\varphi(\bp)\right)\otimes f(\bp)\nabla_{q}f(\bq)\right] \,d\bq \,d\bp .
\end{equation}

Based on Equation \eqref{eq:weaklandau_structure}, we are able to prove the following conservation laws. 
\begin{theorem}[Conservation Laws]\label{thm:conserv}
    Suppose that $f(\mathbf{p},t)$ is the solution to the FPL equation \eqref{eqn:fpl_mal}, then we have
    \begin{equation}
        \begin{split}
            \partial_{t}\left(f,1\right)&=0\\
            \partial_{t}\left(f,\mathbf{p}\right)&=0\\
            \partial_{t}\left(f,\mathcal{E}(\mathbf{p})\right)&=0
        \end{split}
    \end{equation}
\end{theorem}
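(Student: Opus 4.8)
The plan is to read off all three conservation laws directly from the symmetrized weak form \eqref{eq:weaklandau_structure}, which is engineered precisely so that the conserved quantity enters only through the gradient difference $\nabla_q\varphi(\bq)-\nabla_p\varphi(\bp)$. For each invariant I would substitute the corresponding test function $\varphi$ into \eqref{eq:weaklandau_structure} and show that the double-integral on the right-hand side vanishes, so that $\partial_t(f,\varphi)=0$. The whole argument thereby reduces to elementary algebraic identities, with no need to return to \eqref{eq:weaklandau}.

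First, for mass I would take $\varphi\equiv 1$; then $\nabla_p\varphi=\nabla_q\varphi=\mathbf{0}$, the integrand vanishes identically, and $\partial_t(f,1)=0$. For momentum I would treat $\varphi=\mathbf{p}$ one Cartesian component at a time, $\varphi(\bp)=p_i$. Because this $\varphi$ is linear, its gradient is the constant $\unitv_i$, so $\nabla_q\varphi(\bq)-\nabla_p\varphi(\bp)=\unitv_i-\unitv_i=\mathbf{0}$ and the integrand vanishes before any contraction with $\Phi$ is even needed. Running $i=1,2,3$ gives $\partial_t(f,\mathbf{p})=\mathbf{0}$. This is the step where the symmetrized form pays off: in the unsymmetrized form \eqref{eq:weaklandau} momentum conservation would instead require an explicit $\bp\leftrightarrow\bq$ swap.

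For energy I would take $\varphi=\mathcal{E}(\bp)$, so that $\nabla_p\varphi(\bp)=\nabla_p\mathcal{E}(\bp)$ and $\nabla_q\varphi(\bq)=\nabla_q\mathcal{E}(\bq)$. Setting $\mathbf{v}=\nabla_p\mathcal{E}(\bp)$, $\mathbf{w}=\nabla_q\mathcal{E}(\bq)$ and $\mathbf{u}=\mathbf{v}-\mathbf{w}$ as in \eqref{eq:tensorS}, the gradient difference becomes exactly the relative velocity up to sign, $\nabla_q\varphi(\bq)-\nabla_p\varphi(\bp)=-\mathbf{u}$. Using $\Phi=\Lambda\,\mathbb{S}$, the integrand is then proportional to $\mathbb{S}:[\mathbf{u}\otimes\nabla_q f(\bq)]$, and since $\mathbb{S}$ is symmetric one has $\mathbb{S}:[\mathbf{u}\otimes\mathbf{a}]=(\mathbb{S}\mathbf{u})\cdot\mathbf{a}$ for every $\mathbf{a}$. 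Energy conservation therefore collapses to the single null-space identity $\mathbb{S}\mathbf{u}=\mathbf{0}$.

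I expect verifying this identity to be the one genuinely structural (though still elementary) step, as it is the property the rest of the paper is built to preserve discretely. In the non-relativistic case $(\lvert\mathbf{u}\rvert^2 I-\mathbf{u}\otimes\mathbf{u})\mathbf{u}=\lvert\mathbf{u}\rvert^2\mathbf{u}-\mathbf{u}(\mathbf{u}\cdot\mathbf{u})=\mathbf{0}$. In the relativistic case the additional term contributes $(\mathbf{z}\times\mathbf{u})\,[(\mathbf{z}\times\mathbf{u})\cdot\mathbf{u}]$, which vanishes because $\mathbf{z}\times\mathbf{u}\perp\mathbf{u}$; hence $\mathbb{S}\mathbf{u}=\mathbf{0}$ in both regimes. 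One technical caveat I would flag is that $\varphi=p_i$ and $\varphi=\mathcal{E}(\bp)$ grow at infinity, so passing to \eqref{eq:weaklandau_structure} with these test functions tacitly assumes enough decay and integrability of $f$ and $\nabla f$ to discard boundary contributions at infinity; I would state that the solution is assumed to possess sufficiently many finite moments for this to be legitimate.
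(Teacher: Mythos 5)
Your proposal is correct and follows essentially the same route as the paper: read mass and momentum conservation directly off the vanishing of $\nabla_{q}\varphi(\bq)-\nabla_{p}\varphi(\bp)$ in the symmetrized weak form \eqref{eq:weaklandau_structure}, and reduce energy conservation to the null-space identity $\mathbb{S}\left(\nabla_{p}\mathcal{E}(\bp),\nabla_{q}\mathcal{E}(\bq)\right)\cdot\mathbf{u}=0$. Your explicit verification of that identity in both the non-relativistic and relativistic cases, and your remark on the decay needed to justify the unbounded test functions, are slightly more detailed than the paper's eigenvector argument but not a different proof.
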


\begin{proof}
    One can immediately observe that the mass $\int_{\mathbb{R}^{3}}f\ d\mathbf{p}$ and momentum $\int_{\mathbb{R}^{3}}f \mathbf{p} \ d \mathbf{p}$ are conserved since $\nabla_{q}\varphi(\bq)-\nabla_{p}\varphi(\bp)=0$ when $\varphi(\mathbf{p})=1$ or $\mathbf{p}$.

As for energy conservation, note that the vectors $\mathbf{u}$ and $\mathbf{z}$ generates a set of orthogonal basis vectors: $\mathbf{u}$, $\mathbf{z} \times \mathbf{u}$ and $\mathbf{u} \times (\mathbf{z} \times \mathbf{u})$. They are, in fact, three eigenvectors of the tensor $\mathbb{S}$ in \eqref{eq:tensorS}. In particular, the relative velocity $\mathbf{u}$ is an eigenvector whose eigenvalue is zero, i.e.
\begin{equation*}
    \mathbb{S}\cdot\mathbf{u} = \mathbb{S}\left(\nabla_{p}\mathcal{E}(\bp),\nabla_{q}\mathcal{E}(\bq)\right)\cdot\left(\nabla_{p}\mathcal{E}(\bp)-\nabla_{q}\mathcal{E}(\bq)\right)=0.
\end{equation*}
As a consequence, the FPL equation always preserves energy $\int_{\mathbb{R}^{3}}f \mathcal{E} \ d \mathbf{p}$.
\end{proof}

\begin{remark}
    In classical mechanics, the velocity of a particle is proportional to its momentum, while in the theory of relativity it is not. To be as general as possible, we use $\nabla_{p}\mathcal{E}(\bp)$ to represent velocity. And as we will see later, this relation between velocity and kinetic energy is the key to a structure-preserving strategy.
\end{remark}

Moreover, the positive semi-definiteness of tensor $\mathbb{S}$ leads to entropy dissipation
\begin{equation*}
    \partial_{t}\left(\int_{\mathbb{R}^{3}}f \log f \ d \mathbf{p}\right) \leq 0.
\end{equation*}

A key step in designing out method is distributing the tensor-vector product in the strong form (\ref{eqn:fpl_mal}) in order to obtain a \textit{non-linear, non-local, advection-diffusion equation}

\begin{equation}
    \partial_t f =\nabla_{p}\cdot \left(\mathcal{D}\nabla_{p} f-\mathcal{U}f \right),
    \label{eqn:adv_diff_form}
\end{equation}
where 
\begin{equation}\label{eqn:advection_coeff}
    \mathcal{U}(\bp)=\int_{\R^3} \Phi(\bp,\bq)\nabla_{q}f(\bq)\,d\bq,
\end{equation}
and
\begin{equation}
    \mathcal{D}(\bp)=\int_{\R^3} \Phi(\bp,\bq)f(\bq)\,d\bq. \label{eqn:diffusion_coeff}
\end{equation}
Note that the positive semi-definiteness of tensor field $\mathbb{S}$ and the non-negativity of \textit{pdf} $f$ guarantees the positive semi-definiteness of diffusion coefficient $\mathcal{D}(\mathbf{p})$.

Given certain \textit{pdf} $f(\mathbf{p})$, there can be subsets of the momentum space $\mathbb{R}^{3}$, where advection is dominant, i.e. $\lVert\mathcal{U} \rVert\gg \lVert \mathcal{D}\rVert$. Moreover, when coupled with the Vlasov equation, the magnitude of the Lorentz force can easily exceed that of the collision term. From this perspective, stabilization techniques such as upwinding becomes necessary, which motivates the use of the local discontinous Galerkin (LDG) method.

\section{The Local Discontinuous Galerkin Discretization}\label{sec:num_methods}
Here we will define the LDG method for an advection-diffusion equation in semi-discrete form. Note that the diffusion coefficient $\mathcal{D}$ is a tensor rather than a constant, therefore we take the $L^2$-projection of the diffusion flux, as was proposed in \cite{cockburn1999some}. The discrete gradient operator will be introduced, which is the key ingredient to our structure-preserving scheme.

\subsection{Notations, definitions and projections}\label{ssec:notation}

As have been proved in \cite{guillen2023landau}, the solution of a homogeneous FPL equation is always bounded by a Gaussian distribution determined by the initial condition only. Therefore given any $0<\epsilon \ll 1$, there exists a finite domain $\Omega \subsetneq\mathbb{R}^{3}$ such that  for any $t\geq0$,
\begin{equation*}
    \left|1-\frac{\int_{\Omega}f(\bp,t)d\bp}{\int_{\mathbb{R}^{3}}f(\bp,t)d\bp}\right|\leq\epsilon.
\end{equation*}
Hence it is reasonable to assume a compact support for the solution and truncate the computational domain, analogous to existing work on kinetic equations, for example \cite{zhang2017conservative, zhang2018conservative}. 

To solve on a cut-off domain, a suitable boundary condition is necessary. As we know, on the boundary $\partial\Omega$ of a large enough cut-off domain $\Omega$, both the value of \textit{pdf} and the flux across boundary are nearly zero. Therefore the numerical solution is indifferent to the choice of boundary condition, as long as it is homogeneous, i.e. one that admits the trivial solution, zero. We are free to choose one that simplifies the implementation and analysis later on.

Let $\normal$ denote the unit outward normal to $\Gamma\equiv \partial\Omega$. 



In the rest of this paper, we will use the following zero flux Robin condition on the boundary:
\begin{equation}
    (\mathcal{U}f-\mathcal{D}\cdot\nabla_\bp f)\cdot \normal=0,\text{ on }\Gamma,
    \label{eqn:bd_condition}
\end{equation}

We start motivating the LDG method by rewriting \eqref{eqn:adv_diff_form} in the following mixed form:

\begin{equation}
    \partial_t f=\nabla_\bp\cdot(Z-\mathcal{U}f).
\end{equation}
\begin{equation}
    Z=\mathcal{D}\cdot\widetilde{Z}
\end{equation}
with 
\begin{equation}\label{eq:auxiZ}
    \widetilde{Z}=\nabla_\bp f.
\end{equation}

Let the test space be $\mathcal{W}$, assuming smooth enough solutions, we get the following weak formulation of the equations above.
\begin{equation}
\left(\partial_{t}f, \varphi\right)_{\Omega} = \left(\mathcal{U}f - Z, \nabla_{p}\varphi\right)_{\Omega} 
    - \langle \mathcal{U}f - Z, \varphi^{-} \mathbf{n}^{-} \rangle_{\partial\Omega / \Gamma} , \quad \forall\ \varphi \in \mathcal{W}.
    \label{eqn:rhs_cont}
\end{equation}

\begin{equation}
    \left(Z,\widetilde{V}\right)_{\Omega}=\left(\mathcal{D}\widetilde{Z},\widetilde{V}\right)_{\Omega},\quad \forall\  \widetilde{V}\in(\mathcal{W})^3.
\end{equation}

\begin{equation}\label{eq:tildeZ}
    \left(\widetilde{Z},V\right)_{\Omega}=-\left(f,\nabla_\bp\cdot V\right)_{\Omega}+\langle f,V^{-}\cdot\mathbf{n}^{-}\rangle_{\partial \Omega},\quad \forall \ V\in (\mathcal{W})^3.
\end{equation}


Let $\partition=\{R\}$ be a partition of $\Omega$, with $R$ being Cartesian elements or simplices. Denote the mesh size of $\partition$ as $h=\max_{R\in\mathcal{T}_h}\mathrm{diam}(R)$.  Furthermore the inner products inside each element $R$ are defined as 
\begin{equation}
    (g,h)_{R}=\int_R gh\,d\mathbf{p}.
\end{equation}

Let $\mathcal{I}_{h}$ be the set of element faces, 
the integral on each face $e$ is denoted as
\begin{equation*}
    \langle g,h\rangle_{e}=\int_{e}gh\,d\mathbf{s}.
\end{equation*}

For piecewise functions defined with respect to $\partition$, we introduce the jumps and averages as follows. For any interface $e=\{R^+\cap R^-\}\in \mathcal{I}_{h}$, with $\normal^{\pm}$ as the outward unit normal to $\partial R^{\pm}$, $g^{\pm}=\left.g\right|_{R^{\pm}}$ , the jump across $e$ is defined as 
\begin{equation}
    [g]=g^{+}\normal^{+}+g^{-}\normal^{-}.
    \label{eqn:jump}
\end{equation}
and the average across $e$ is
\begin{equation}
    \{g\}=\frac{1}{2}(g^+ + g^-).
    \label{eqn:average}
\end{equation}

Next we define the discrete test space
\begin{equation}\label{eq:discrete_test_space}
    \mathcal{W}_h=\left\{g\in L^2(\Omega): \left.g\right|_{R}\in\Q^k(R), \forall R\in\partition\right\}.
\end{equation}
where $\Q^k(R)$ is the space of polynomials of degree at most $k$ in each variable.

By replacing the infinite dimensional test space $\mathcal{W}$ with the finite dimensional space $\mathcal{W}_{h}$, we obtain the semi-discrete weak form as follows: find $\partial_{t} f_{h} \in \mathcal{W}_{h}, Z_{h} \in \mathcal{W}_{h}^{3}, \widetilde{Z}_{h} \in \mathcal{W}_{h}^{3}$ such that
\begin{equation} \label{weakform-ldg}
    \begin{split}
        &\sum_{R}\left(\partial_{t}f_{h},\varphi_{h}\right)_{R} - \sum_{R}\left(\mathcal{U}f_{h}, \nabla_{p}\varphi_{h}\right)_{R} 
    + \sum_{R}\langle \widehat{\widehat{\mathcal{U}f_{h}}}, \varphi_{h}^{-} \mathbf{n}^{-} \rangle_{\partial R / \Gamma} \\
        =&-\sum_{R}\left(Z_{h},\nabla\varphi_{h}\right)_{R}+\sum_{R}\langle\widehat{Z}_{h},\varphi_{h}^{-}\mathbf{n}^{-}\rangle_{\partial R/\Gamma}, \forall \ \varphi_{h} \in  \mathcal{W}_{h},\\
        \sum_{R}\left(Z_{h},\widetilde{V}_{h}\right)_{R}
        =&\sum_{R}\left(\mathcal{D}\cdot\widetilde{Z}_{h},\widetilde{V}_{h}\right)_{R}, \forall\  \widetilde{V}_{h} \in \mathcal{W}_{h}^{3},\\
        \sum_{R}\left(\widetilde{Z}_{h},V_{h}\right)_{R}
        =&-\sum_{R}\left(f_{h},\nabla\cdot V_{h}\right)_{R}+\sum_{R}\langle\widecheck{f}_{h},V_{h}^{-}\cdot\mathbf{n}^{-}\rangle_{\partial R},\forall \  Z_{h} \in \mathcal{W}_{h}^{3}.
    \end{split}
\end{equation}

Since the discrete solutions $f_{h},Z_{h},\widetilde{Z}_{h}$ are discontinuous on the interfaces, the flux terms $\widehat{\widehat{\mathcal{U}f_{h}}}, \widehat{Z}_{h},\widecheck{f}_{h}$ remains to be determined. They will be addressed later in \eqref{Zflux}, \eqref{fflux}, and \eqref{upwindflux}.

\subsection{The bilinear form of the LDG method}

	
	

In the previous subsection, we introduced the LDG method, and left the key ingredient, flux terms, undetermined. Our conservation strategy is only possible with alternating flux. Before demonstrating the reason, we have to give a brief introduction on some prerequisite knowledge in this subsection.

In what follows, we will introduce a key concept: the \textit{discrete gradient operator} $\nabla_{h}$, and show that alternating flux enables a special bilinear form of LDG.

\bigskip
Choosing an appropriate reference direction $\mathbf{u}$, say $\mathbf{u}=(1,1,1)$, we use alternating flux as follows:
\begin{equation}\label{Zflux}
    \widehat{Z}=\begin{cases}
        Z^{-}, & \mathbf{n}^{-}\cdot\mathbf{u}>0\\
        Z^{+}, & \text{otherwise}
    \end{cases}
\end{equation}
and
\begin{equation}\label{fflux}
    \widecheck{f}=\begin{cases}
        f^{-}, & \mathbf{n}^{-}\cdot\mathbf{u}<0\ \text{or\ \ensuremath{\partial R}\ensuremath{\subset\partial\Omega}}\\
        f^{+}, & \text{otherwise}
    \end{cases}
\end{equation}

Now recall the definition of $\widetilde{Z}$ in Equation (\ref{eq:tildeZ}), it is just a projection of $\nabla_{p}f$ into the test space $\mathcal{W}_{h}$. In other words, $\widetilde{Z}$ is a discrete gradient of $f$. The map from $f$ to its discrete gradient $\widetilde{Z}$ is called the discrete gradient operator.

\begin{defi}[discrete gradient operator]\label{discretegrad}
We define $\nabla_{h}f_{h}$ as the function in $\mathcal{W}_{h}$ such that,
\begin{equation}\label{dgrad}
    \sum_{R}\left(\nabla_{h}f_{h},V_{h}\right)_{R}=-\sum_{R}\left(f_{h},\nabla\cdot V_{h}\right)_{R}+\sum_{R}\langle\widecheck{f}_{h},V_{h}^{-}\cdot\mathbf{n}^{-}\rangle_{\partial R},\forall V_{h}\in\mathcal{W}_{h}^{3}.
\end{equation}
And the operator $\nabla_{h}$ is the discrete gradient operator w.r.t. the flux $\widecheck{f}$ in Equation(\ref{fflux}).

\end{defi}

One must wonder what is the difference between a gradient operator and a discrete graident operator. The following proposition shows that for $f$ being discontinuous piecewise polynomial, the only difference comes from the jumps on the interfaces.

 \begin{prop}\label{u_equiv_d}
 If $f_{h} \in \mathcal{W}_{h}$, then $\nabla_{h} f_{h}$ satisfies that
\begin{equation}\label{dgrad2}
    -\sum_{R}\left(\nabla_{h}f_{h},V_{h}\right)_{R}=-\sum_{R}\left(\nabla f_{h},V_{h}\right)_{R}+\sum_{R}\langle f_{h}^{-}\mathbf{n}^{-},\widehat{V_{h}}\rangle_{\partial R/\Gamma},\forall V_{h}\in\mathcal{W}_{h}^{3}
\end{equation}
where $\widehat{V_{h}}$ is the alternating flux in Equation(\ref{Zflux}).

Consequently, if we define the lift operator $r$ as in \cite{arnold2002unified}:
\begin{equation*}
    \sum_{R}\left(r([[f_{h}]]),V_{h}\right)_{R}=-\sum_{l_{e}\in\partial R/\Gamma}\langle[[f_{h}]],\widehat{V_{h}}\rangle_{l_{e}},\forall V_{h}\in\mathcal{W}_{h}^{3},
\end{equation*}
then
\begin{equation}
    \nabla_{h}f_{h}=\nabla f_{h}+r([[f_{h}]]).
\end{equation}

\end{prop}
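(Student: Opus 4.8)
The plan is to obtain \eqref{dgrad2} directly from the defining relation \eqref{dgrad} by integrating by parts on each element and then rearranging the boundary integrals. First I would apply the divergence theorem to the volume term $\left(f_{h},\nabla\cdot V_{h}\right)_{R}$ on every $R\in\partition$, which gives
\begin{equation*}
    \left(f_{h},\nabla\cdot V_{h}\right)_{R}=-\left(\nabla f_{h},V_{h}\right)_{R}+\langle f_{h}^{-},V_{h}^{-}\cdot\mathbf{n}^{-}\rangle_{\partial R},
\end{equation*}
the interior trace $f_{h}^{-}$ being well defined since $f_{h}|_{R}$ is polynomial. Substituting this into \eqref{dgrad} and negating, the volume contributions collapse to $-\sum_{R}(\nabla f_{h},V_{h})_{R}$, while the face contributions become $-\sum_{R}\langle \widecheck{f}_{h}-f_{h}^{-},V_{h}^{-}\cdot\mathbf{n}^{-}\rangle_{\partial R}$. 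It then remains to show that this last sum equals $\sum_{R}\langle f_{h}^{-}\mathbf{n}^{-},\widehat{V_{h}}\rangle_{\partial R/\Gamma}$.

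The heart of the argument is the reorganization of the element-boundary sum into a sum over faces. On each interior face $e$ shared by $R^{+}$ and $R^{-}$ I would collect the two contributions coming from the adjacent elements. The decisive structural fact is that the alternating fluxes \eqref{Zflux} and \eqref{fflux} are \emph{complementary}: whenever $\widecheck{f}$ is taken from one side, $\widehat{V_{h}}$ is taken from the opposite side. Hence on a face where, say, $\mathbf{n}^{-}\cdot\mathbf{u}>0$, one has $\widecheck{f}=f^{+}$, so that $\widecheck{f}-f^{-}$ vanishes in the $R^{+}$ contribution and equals $f^{+}-f^{-}$ in the $R^{-}$ contribution, while $\widehat{V_{h}}=V_{h}^{-}$. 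Using the jump definition \eqref{eqn:jump}, $[[f_{h}]]=f^{+}\mathbf{n}^{+}+f^{-}\mathbf{n}^{-}=(f^{-}-f^{+})\mathbf{n}^{-}$, a short computation shows that both $-\langle\widecheck{f}_{h}-f_{h}^{-},V_{h}^{-}\cdot\mathbf{n}^{-}\rangle_{e}$ and $\langle[[f_{h}]],\widehat{V_{h}}\rangle_{e}$ reduce to $\int_{e}(f^{-}-f^{+})(V_{h}^{-}\cdot\mathbf{n}^{-})\,d\mathbf{s}$; the case $\mathbf{n}^{-}\cdot\mathbf{u}<0$ is symmetric after exchanging the roles of $+$ and $-$.

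For the boundary faces I would use that \eqref{dgrad2} excludes $\Gamma$ on the right, and that by the boundary branch of \eqref{fflux} one has $\widecheck{f}=f^{-}$ on any $\partial R\subset\partial\Omega$, so $\widecheck{f}-f^{-}=0$ there and the left-hand boundary contributions vanish as well; the two sides are therefore consistent, which establishes \eqref{dgrad2}. The consequence $\nabla_{h}f_{h}=\nabla f_{h}+r([[f_{h}]])$ then follows by reading \eqref{dgrad2} through the lift operator: the same per-face identity shows $\sum_{R}\langle f_{h}^{-}\mathbf{n}^{-},\widehat{V_{h}}\rangle_{\partial R/\Gamma}=\sum_{e}\langle[[f_{h}]],\widehat{V_{h}}\rangle_{e}$, which is exactly $-\sum_{R}(r([[f_{h}]]),V_{h})_{R}$ by the definition of $r$. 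Substituting, and using that the resulting identity holds for every $V_{h}\in\mathcal{W}_{h}^{3}$ together with the fact that $\nabla f_{h}\in\mathcal{W}_{h}^{3}$ elementwise (differentiation lowers the degree, so $\nabla f_{h}|_{R}\in(\Q^{k}(R))^{3}$), yields the stated decomposition.

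The main obstacle I anticipate is the bookkeeping in the face reorganization: one must track the orientation convention $\mathbf{n}^{+}=-\mathbf{n}^{-}$, remember that $f_{h}^{-}$ in \eqref{dgrad} always denotes the trace from the element currently being summed, and verify that the complementary choice of sides in \eqref{Zflux}--\eqref{fflux} is precisely what makes the vanishing-plus-jump cancellation work irrespective of the sign of $\mathbf{n}^{-}\cdot\mathbf{u}$. Everything else is routine elementwise integration by parts and an application of the fundamental lemma on $\mathcal{W}_{h}^{3}$.
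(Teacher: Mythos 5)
Your proposal is correct and follows essentially the same route as the paper: elementwise integration by parts in the defining relation \eqref{dgrad}, followed by regrouping the element-boundary sums face by face and using the complementarity of the alternating fluxes \eqref{Zflux}--\eqref{fflux} to identify the residual with $\sum_{R}\langle f_{h}^{-}\mathbf{n}^{-},\widehat{V_{h}}\rangle_{\partial R/\Gamma}$. Your explicit sign casework, the treatment of boundary faces, and the closing variational argument with $\nabla f_{h}\in\mathcal{W}_{h}^{3}$ only spell out steps the paper leaves implicit.
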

	
\begin{proof}
Integrate by parts on the right hand side of Equation(\ref{dgrad}),  we have
		\begin{equation*}
			\begin{split}
			    \sum_{R}\left(\nabla_{h}f_{h},V_{h}\right)_{R}=&-\sum_{R}\left(f_{h},\nabla\cdot V_{h}\right)_{R}+\sum_{R}\langle\widecheck{f_{h}},V_{h}^{-}\cdot\mathbf{n}^{-}\rangle_{\partial R}\\
       =&\sum_{R}\left(\nabla f_{h},V_{h}\right)_{R}+\sum_{R}\langle\widecheck{f}_{h}-f_{h}^{-},V_{h}^{-}\cdot\mathbf{n}^{-}\rangle_{\partial R/\Gamma}
			\end{split}
		\end{equation*}
		
Comparing with Equation(\ref{dgrad2}), it remains to show that
\begin{equation*}
    \sum_{R}\langle\widecheck{f}_{h}-f_{h}^{-},V_{h}^{-}\cdot\mathbf{n}^{-}\rangle_{\partial R/\Gamma}=-\sum_{R}\langle f_{h}^{-}\mathbf{n}^{-},\widehat{V_{h}}\rangle_{\partial R/\Gamma}.
\end{equation*}

Note that in the above summation, each interface is counted twice. Suppose that on the interface between $R_{l}$ and $R_{r}$ we have $\widecheck{f_{h}}=f^{l}_{h}$ and $\widehat{V_{h}}=V^{r}_{h}$, then

\begin{equation}
\begin{split}    
    &\sum_{R_{l}, R_{r}}\langle\widecheck{f_{h}},V_{h}^{-}\cdot\mathbf{n}^{-}\rangle_{\partial R_{l} \cap \partial R_{r}}+\sum_{R_{l}, R_{r}}\langle f_{h}^{-}\mathbf{n}^{-},\widehat{V_{h}}\rangle_{\partial R_{l} \cap \partial R_{r}}\\
    =&\langle f^{l}_{h},V_{h}^{l}\cdot\mathbf{n}^{l}\rangle_{\partial R_{l} \cap \partial R_{r}}+ \langle f^{l}_{h},V_{h}^{r}\cdot\mathbf{n}^{r}\rangle_{\partial R_{l} \cap \partial R_{r}}\\
    &+ \langle f_{h}^{l}\mathbf{n}^{l},V^{r}_{h}\rangle_{\partial R_{l} \cap \partial R_{r}} + \langle f_{h}^{r}\mathbf{n}^{r},V^{r}_{h}\rangle_{\partial R_{l} \cap \partial R_{r}}\\
    =&\langle f^{l}_{h},V_{h}^{l}\cdot\mathbf{n}^{l}\rangle_{\partial R_{l} \cap \partial R_{r}} + \langle f_{h}^{r}\mathbf{n}^{r},V^{r}_{h}\rangle_{\partial R_{l} \cap \partial R_{r}}
\end{split}
\end{equation}

It follows that
\begin{equation}
    \sum_{R}\langle\widecheck{f_{h}},V_{h}^{-}\cdot\mathbf{n}^{-}\rangle_{\partial R/\Gamma}+\sum_{R}\langle f_{h}^{-}\mathbf{n}^{-},\widehat{V_{h}}\rangle_{\partial R/\Gamma}=\sum_{R}\langle f_{h}^{-},V_{h}^{-}\cdot\mathbf{n}^{-}\rangle_{\partial R/\Gamma}
\end{equation}
\end{proof}

With discrete gradient operator, we will be able to rewrite the diffusion part in a compact bilinear form\cite{arnold2002unified}, without involving in the auxiliary field $\widetilde{Z}$ as in Equation(\ref{eq:auxiZ}).
	
\begin{theorem}
The whole system (\ref{weakform-ldg}) is equivalent to
\begin{equation}\label{semid_upwind}
\left(\partial_{t}f_{h},\varphi_{h}\right)_{\Omega}+\left(\nabla_{h}f_{h},\mathcal{D}\cdot\nabla_{h}\varphi_{h}\right)_{\Omega}=\left(\mathcal{U}f_{h},\nabla\varphi_{h}\right)_{\Omega}-\sum_{R}\langle\widehat{\widehat{\mathcal{U}f_{h}}},\varphi_{h}^{-}\mathbf{n}^{-}\rangle_{\partial R/\Gamma}
\end{equation}
\end{theorem}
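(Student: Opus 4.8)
The plan is to observe that the time-derivative term and the entire advection contribution---\(\left(\mathcal{U}f_{h},\nabla\varphi_{h}\right)_{\Omega}\) together with the face term \(\sum_{R}\langle\widehat{\widehat{\mathcal{U}f_{h}}},\varphi_{h}^{-}\mathbf{n}^{-}\rangle_{\partial R/\Gamma}\)---already appear verbatim in both \eqref{weakform-ldg} and \eqref{semid_upwind}. Hence the only thing to establish is that the diffusion part of the first line of \eqref{weakform-ldg}, namely \(-\sum_{R}\left(Z_{h},\nabla\varphi_{h}\right)_{R}+\sum_{R}\langle\widehat{Z}_{h},\varphi_{h}^{-}\mathbf{n}^{-}\rangle_{\partial R/\Gamma}\), collapses to \(-\left(\nabla_{h}f_{h},\mathcal{D}\cdot\nabla_{h}\varphi_{h}\right)_{\Omega}\) once the auxiliary fields \(Z_{h},\widetilde{Z}_{h}\) are eliminated. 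First I would record, directly from the third line of \eqref{weakform-ldg} and Definition \ref{discretegrad}, that \(\widetilde{Z}_{h}=\nabla_{h}f_{h}\), since the third equation is precisely the characterizing relation \eqref{dgrad} with the same flux \(\widecheck{f}_{h}\).

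Next I would apply Proposition \ref{u_equiv_d} with the roles \(f_{h}\mapsto\varphi_{h}\) and \(V_{h}\mapsto Z_{h}\). The crucial structural step is that \(\nabla_{h}\varphi_{h}\) is itself an element of \(\mathcal{W}_{h}^{3}\), so that \(Z_{h}\) may legitimately be tested against it. Identity \eqref{dgrad2} then reads \(-\left(\nabla_{h}\varphi_{h},Z_{h}\right)_{\Omega}=-\sum_{R}\left(\nabla\varphi_{h},Z_{h}\right)_{R}+\sum_{R}\langle\varphi_{h}^{-}\mathbf{n}^{-},\widehat{Z_{h}}\rangle_{\partial R/\Gamma}\), whose right-hand side is exactly the diffusion part isolated above (the alternating flux \(\widehat{Z}_{h}\) matching the one prescribed in \eqref{Zflux}). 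Thus that part equals \(-\left(Z_{h},\nabla_{h}\varphi_{h}\right)_{\Omega}\), and the face term together with the alternating flux has been absorbed into the discrete gradient of the test function.

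Finally I would substitute the constitutive relations. Because \(\nabla_{h}\varphi_{h}\in\mathcal{W}_{h}^{3}\), the second line of \eqref{weakform-ldg}---which defines \(Z_{h}\) as the \(L^{2}\)-projection of \(\mathcal{D}\cdot\widetilde{Z}_{h}\)---may be evaluated at \(\widetilde{V}_{h}=\nabla_{h}\varphi_{h}\), giving \(\left(Z_{h},\nabla_{h}\varphi_{h}\right)_{\Omega}=\left(\mathcal{D}\cdot\widetilde{Z}_{h},\nabla_{h}\varphi_{h}\right)_{\Omega}=\left(\mathcal{D}\cdot\nabla_{h}f_{h},\nabla_{h}\varphi_{h}\right)_{\Omega}\), the last equality using \(\widetilde{Z}_{h}=\nabla_{h}f_{h}\). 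Invoking the symmetry of \(\mathcal{D}\), inherited from the symmetry of the kernel tensor \(\mathbb{S}\) in \eqref{eq:tensorS}, rewrites this as \(\left(\nabla_{h}f_{h},\mathcal{D}\cdot\nabla_{h}\varphi_{h}\right)_{\Omega}\), and transferring it to the left-hand side yields \eqref{semid_upwind}. I expect the main obstacle to be conceptual rather than computational: one must recognize that testing the auxiliary equation against the discrete gradient of \(\varphi_{h}\)---rather than an arbitrary vector field---is exactly what makes the elimination clean, causing both face terms and the auxiliary variable to vanish simultaneously. I would additionally verify the symmetry of \(\mathcal{D}\) explicitly and remark that each step is reversible, so the two formulations are genuinely equivalent and not merely one-directional.
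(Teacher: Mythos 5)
Your proposal is correct and follows essentially the same route as the paper: identify $\widetilde{Z}_{h}=\nabla_{h}f_{h}$ from the third equation, use Proposition \ref{u_equiv_d} (with $f_{h}\mapsto\varphi_{h}$, $V_{h}\mapsto Z_{h}$) to collapse the volume-plus-face diffusion terms into $-\left(Z_{h},\nabla_{h}\varphi_{h}\right)_{\Omega}$, and then test the constitutive equation against $\nabla_{h}\varphi_{h}\in\mathcal{W}_{h}^{3}$. Your explicit remarks on the symmetry of $\mathcal{D}$ and on reversibility are small additions the paper leaves implicit, but they do not change the argument.
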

	
\begin{proof}
To shorten the length of the formulas, we introduce the following notation:
\begin{equation*}
    \mathcal{A} = \left(\mathcal{U}f_{h},\nabla\varphi_{h}\right)_{\Omega}-\sum_{R}\langle\widehat{\widehat{\mathcal{U}f_{h}}},\varphi_{h}^{-}\mathbf{n}^{-}\rangle_{\partial R/\Gamma}
\end{equation*}

Recall the first row of Equation(\ref{weakform-ldg}), by Proposition \ref{u_equiv_d} we have:
\begin{equation*}
    \sum_{R}\left(\partial_{t}f_{h},\varphi_{h}\right)_{R}=-\sum_{R}\left(\nabla_{h}\varphi_{h},Z_{h}\right)_{R}+\mathcal{A}.
\end{equation*}

Since $\nabla_{h}\varphi \in \mathcal{V}$, it follows from the second row of Equation(\ref{weakform-ldg}) that 
\begin{equation*}
    \sum_{R}\left(\partial_{t}f_{h},\varphi_{h}\right)_{R}=-\sum_{R}\left(\mathcal{D}\cdot\widetilde{Z}_{h},\nabla_{h}\varphi_{h}\right)_{R}+\mathcal{A}.
\end{equation*}

By definition, the last row of Equation(\ref{weakform-ldg}) is equivalent to
		\begin{equation*}
			\widetilde{Z}_{h}=\nabla_{h} f_{h}
		\end{equation*}
		
Therefore, the diffusion part can be written as a bilinear form, and we have
\begin{equation*}
    \sum_{R}\left(\partial_{t}f_{h},\varphi_{h}\right)_{R}=-\sum_{R}\left(\mathcal{D}\cdot\nabla_{h}f_{h},\nabla_{h}\varphi_{h}\right)_{R}+\mathcal{A}.
\end{equation*}

\end{proof}

\section{Structure-Preserving Schemes}\label{sec:conservation_strategy}
In this section, we first introduce the concept of variational crime, as it helps understanding our strategy of structure-preserving. After that, we demonstrate how this strategy in general contradicts with stabilization, and propose an upwind structure-preserving LDG scheme in the end.

\subsection{Variational Crime}
A \textit{variational crime}\cite{strang1972variational} occurs when the discrete problem does not strictly follow the form of the continuous variational problem. For example, when solving the simple elliptic PDE 
\begin{equation*}
    \nabla\cdot(\mathcal{D}\cdot\nabla f) = \sigma 
\end{equation*}
with finite element method, the legit way is to solve the Galerkin variational problem: find $f_{h} \in \mathcal{W}_{h}$ such that
\begin{equation*}
    B(f_{h},\varphi_{h}) \coloneqq \int_{\Omega} \left(\mathcal{D}\cdot \nabla f_{h} \right)\cdot\nabla \varphi_{h}= S(\varphi) \coloneqq \int_{\Omega}\sigma \varphi_{h}, \forall \varphi_{h} \in \mathcal{W}_{h}.
\end{equation*}
However, in practice, it may be impossible to calculate the integral $B(f_{h}, \varphi_{h})$ exactly. And we have to compromise by approximating the integral with numerical quadrature, or approximating the diffusion coefficient $\mathcal{D}$ with piecewise polynomials to obtain an exact integral, etc.

Despite the ``crime", many such approximations still yield convergence, but they must be analyzed carefully.

In solving the FPL equation, variational crimes are usually unavoidable due to the complexity of the collision kernel $\Phi(\mathbf{p}, \mathbf{q})$, which makes exact integration infeasible and necessitates the use of quadrature. Since such variational crimes are inevitable, it is reasonable to take advantage of them to serve other purposes. 

It turns out that a structure-preserving scheme can be obtained by replacing the collision kernel $\Phi$ with a specifically designed approximation $\Phi_{h}$. 

\begin{remark}
    The same strategy not only applies to the FPL equation, but also to the quasilinear theory for particle-wave interaction\cite{huang2023conservative}.
\end{remark}

\subsection{Symmetric Structure-Preserving Scheme}

In \cite{shiroto2019structure}, Shiroto and Sentoku proposed a structure-preserving strategy for finite difference schemes which renders rigorous conservation laws. As we know, finite difference scheme is equivalent to discontinuous Galerkin scheme with piecewise constant basis functions. In this subsection, we generalize that strategy to LDG with higher order polynomials.

The conservation laws of FPL equations follow from a special structure in their weak forms. Recall the weak form in Equation \eqref{eq:weaklandau_structure}
\begin{equation*}
    \int_{\mathbb{R}^3}\varphi\partial_{t}f \,d \bp =-\int_{\mathbb{R}^3}\int_{\mathbb{R}^3} \Lambda(\bp,\bq) \mathbb{S}(\mathbf{v},\mathbf{w}):\left[\left(\nabla_{q}\varphi(\bq)-\nabla_{p}\varphi(\bp)\right)\otimes f(\bp)\nabla_{q}f(\bq)\right] \,d\bq \,d\bp .
\end{equation*}
Energy conservation relies on the fact that
\begin{equation*}
    \mathbb{S}\left(\nabla_{p}\mathcal{E}(\bp),\nabla_{q}\mathcal{E}(\bq)\right)\cdot\left(\nabla_{p}\mathcal{E}(\bp)-\nabla_{q}\mathcal{E}(\bq)\right)=0.
\end{equation*}

Considering that the single particle kinetic energy $\mathcal{E}(\mathbf{p})$ may not belong to the test function space $\mathcal{W}_{h}$, we pursue a numerical scheme that conserves discrete energy, i.e. a scheme such that
\begin{equation*}
    \partial_{t}\left(f_{h},\Pi_{h}\mathcal{E}\right)_{\Omega} = 0
\end{equation*}
where $\Pi_{h}$ is a projection to the test space $\mathcal{W}_{h}$, such that 
    \begin{align*}
        \lim_{h\rightarrow0}\lVert\mathcal{E}-\Pi_{h}\mathcal{E}\rVert_{L^{2}(\Omega)}&=0\\
        \lim_{h\rightarrow0,r(\Omega)\rightarrow \infty}\lVert \nabla\mathcal{E}-\nabla_{h}\Pi_{h}\mathcal{E}\rVert_{L^{2}(\Omega;e^{-p^2})}&=0
    \end{align*}

The following scheme is inspired by \cite{shiroto2019structure}.

\begin{theorem}[Symmetric Structure-Preserving Scheme]\label{thm:sp_not_upwind}
    If $f_{h}(\mathbf{p},t)$ is a solution to the following semi-discrete weak form,
    \begin{equation}\label{eq_cons_unstable}
		\int_{\Omega}\varphi_{h}\partial_{t}f_{h} \,d \bp =-\int_{\Omega}\int_{\Omega}\Phi_{h}(\mathbf{p},\mathbf{q}):\left[\left(\nabla_{h,q}\varphi_{h}(\mathbf{q})-\nabla_{h,p}\varphi_{h}(\mathbf{p})\right)\otimes f_{h}(\mathbf{p})\nabla_{h,q}f_{h}(\mathbf{q})\right] \,d\bq \,d\bp ,
	\end{equation}
 where $\nabla_{h,p}$ and $\nabla_{h,q}$ are the discrete gradient operator defined in Equation(\ref{dgrad}) and the discrete collision kernel is defined as 
 \begin{equation}\label{sp_kernel}
     \Phi_{h}(\mathbf{p},\mathbf{q})=\Lambda\left(\mathbf{p},\mathbf{q}\right)\mathbb{S}\left(\nabla_{h,p}\Pi_{h}\mathcal{E}(\mathbf{p}),\nabla_{h,q}\Pi_{h}\mathcal{E}(\mathbf{q})\right),
 \end{equation}
then the following discrete conservation laws hold:
\begin{equation}
    \begin{split}
        \partial_{t}\left(f_{h},1\right)_{\Omega}&=0\\
        \partial_{t}\left(f_{h},\Pi_{h}\mathbf{p}\right)_{\Omega}&=0\\
        \partial_{t}\left(f_{h},\Pi_{h}\mathcal{E}(\mathbf{p})\right)_{\Omega}&=0
    \end{split}
\end{equation}
\end{theorem}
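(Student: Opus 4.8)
The plan is to mirror, at the discrete level, the continuous argument of Theorem \ref{thm:conserv}, exploiting that the semi-discrete weak form \eqref{eq_cons_unstable} has exactly the same algebraic skeleton as the structured continuous form \eqref{eq:weaklandau_structure}, with $\nabla$ replaced by $\nabla_h$, $\Phi$ by $\Phi_h$, and $\R^3$ by $\Omega$. The entire proof then reduces to choosing the three test functions $\varphi_h = 1$, $\varphi_h = \Pi_h p_i$, and $\varphi_h = \Pi_h\mathcal{E}$, and showing that in each case the bracketed factor $\nabla_{h,q}\varphi_h(\mathbf{q}) - \nabla_{h,p}\varphi_h(\mathbf{p})$ either vanishes identically or is annihilated by the discrete tensor $\mathbb{S}$.

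For mass I take $\varphi_h = 1 \in \mathcal{W}_h$. By Proposition \ref{u_equiv_d} (or directly from \eqref{dgrad} via the divergence theorem) the discrete gradient of a constant vanishes, $\nabla_h 1 = \nabla 1 + r([[1]]) = 0$, since $[[1]] = \mathbf{n}^+ + \mathbf{n}^- = 0$ on every interface. Hence $\nabla_{h,q}(1) - \nabla_{h,p}(1) = 0$ identically, the right-hand side of \eqref{eq_cons_unstable} collapses, and $\partial_t(f_h,1)_\Omega = 0$. For momentum I test componentwise with $\varphi_h = \Pi_h p_i$. Each coordinate $p_i$ is affine, so for $k\ge 1$ it lies in $\mathcal{W}_h$ and is reproduced by the projection, $\Pi_h p_i = p_i$; being globally continuous it has zero jumps, and Proposition \ref{u_equiv_d} gives $\nabla_h p_i = \nabla p_i = \mathbf{e}_i$, a \emph{constant}. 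Because $\nabla_{h,p}\varphi_h(\mathbf{p})$ and $\nabla_{h,q}\varphi_h(\mathbf{q})$ are the same function evaluated at the two arguments, a constant discrete gradient forces their difference to vanish, yielding $\partial_t(f_h,\Pi_h p_i)_\Omega = 0$ for each $i$.

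The energy law is the crux, and the only place where the specific design of $\Phi_h$ enters. Testing with $\varphi_h = \Pi_h\mathcal{E}$, the bracketed factor becomes $\nabla_{h,q}\Pi_h\mathcal{E}(\mathbf{q}) - \nabla_{h,p}\Pi_h\mathcal{E}(\mathbf{p}) = \mathbf{w}_h - \mathbf{v}_h = -\mathbf{u}_h$, where I abbreviate $\mathbf{v}_h := \nabla_{h,p}\Pi_h\mathcal{E}(\mathbf{p})$ and $\mathbf{w}_h := \nabla_{h,q}\Pi_h\mathcal{E}(\mathbf{q})$. The discrete kernel \eqref{sp_kernel} is built precisely so that $\mathbb{S}$ carries these \emph{same} vectors as its velocity arguments, $\Phi_h = \Lambda\,\mathbb{S}(\mathbf{v}_h,\mathbf{w}_h)$, so the relative velocity $\mathbf{u}_h = \mathbf{v}_h - \mathbf{w}_h$ of the tensor coincides with the factor produced by the test function. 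The null-space identity $\mathbb{S}(\mathbf{v},\mathbf{w})\cdot(\mathbf{v}-\mathbf{w}) = 0$ established in Theorem \ref{thm:conserv} is purely algebraic and so holds verbatim for the discrete vectors, giving $\mathbb{S}(\mathbf{v}_h,\mathbf{w}_h)\,\mathbf{u}_h = 0$. Using the symmetry of $\mathbb{S}$ and the contraction identity $\mathbb{S}:(\mathbf{a}\otimes\mathbf{b}) = (\mathbb{S}\mathbf{a})\cdot\mathbf{b}$, the integrand reduces to $\Lambda\,\bigl(\mathbb{S}(\mathbf{v}_h,\mathbf{w}_h)(-\mathbf{u}_h)\bigr)\cdot(f_h\nabla_{h,q}f_h) = 0$ pointwise in $(\mathbf{p},\mathbf{q})$, whence $\partial_t(f_h,\Pi_h\mathcal{E})_\Omega = 0$.

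The main obstacle, and the conceptual heart of the statement, is reconciling $\mathcal{E}\notin\mathcal{W}_h$ with the need for an \emph{exact} discrete null-space cancellation: were $\mathbb{S}$ evaluated at the true velocity $\nabla_p\mathcal{E}$ while the test function delivered $\nabla_h\Pi_h\mathcal{E}$, the two vectors would differ and the cancellation would survive only up to a projection (variational-crime) error. The resolution is exactly the definition \eqref{sp_kernel}, which commits the same ``crime'' on both sides by inserting $\nabla_h\Pi_h\mathcal{E}$ into $\mathbb{S}$. I would therefore stress that the identity requires neither quadrature accuracy nor smoothness of $\mathcal{E}$, only that the velocity arguments of $\mathbb{S}$ and the discrete gradient generated by the test function be literally the same object; the mass and momentum laws, by contrast, need merely that $\nabla_h$ annihilates constants and returns the constant gradient of affine functions, both immediate from Proposition \ref{u_equiv_d}.
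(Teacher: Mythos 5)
Your proof is correct and follows the same route as the paper's: the paper simply declares mass and momentum conservation ``trivial'' (you supply the details that $\nabla_h$ annihilates constants and returns the constant gradient of the affine, jump-free functions $p_i$), and for energy it uses exactly your argument that the test function's discrete gradient difference coincides with the relative-velocity argument of $\mathbb{S}$ in the designed kernel \eqref{sp_kernel}, so the purely algebraic identity $\mathbb{S}(\mathbf{v},\mathbf{w})\cdot(\mathbf{v}-\mathbf{w})=0$ kills the integrand pointwise. No gaps; your version is just a more explicit writeup of the same proof.
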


\begin{proof}
    Mass and momentum conservation laws are trivial. For energy conservation, note that when substituting $\varphi_{h}$ with $\Pi_{h}\mathcal{E}(\mathbf{p})$ in Equation(\ref{eq_cons_unstable}), the integrand on the right hand side is proportional to the following term:
    \begin{equation}
        \mathbb{S}\left(\nabla_{h,p}\Pi_{h}\mathcal{E}(\mathbf{p}),\nabla_{h,q}\Pi_{h}\mathcal{E}(\mathbf{q})\right)\cdot\left(\nabla_{h,p}\Pi_{h}\mathcal{E}(\mathbf{p})-\nabla_{h,q}\Pi_{h}\mathcal{E}(\mathbf{q})\right),
    \end{equation}
    which is always equal to zero since for any $\mathbf{v},\mathbf{w} \in \mathbb{R}^3$, 
    \begin{equation*}
        \mathbb{S}\left(\mathbf{v},\mathbf{w}\right)\cdot\left(\mathbf{v}-\mathbf{w}\right)=0.
    \end{equation*}
\end{proof}

Note that the strategy does not rely on the choice of basis functions. For certain choices, it reproduces existing schemes:

\begin{itemize}
    \item For non-relativistic FPL equation, if all basis functions are continuous and the test space $\mathcal{W}_{h}$ contains the kinetic energy $\mathcal{E}(\mathbf{p})=\lvert\mathbf{p}\rvert^{2}/2$, then $\nabla_{h,p}\Pi_{h}\mathcal{E}(\mathbf{p})=\nabla_{p}\mathcal{E}(\mathbf{p})$, and \eqref{eq_cons_unstable} degenerates to a naive discretization as follows:
    
    \begin{equation}
        \int_{\Omega}\varphi_{h}\partial_{t}f_{h} \,d \bp =-\int_{\Omega}\int_{\Omega}\Phi(\mathbf{p},\mathbf{q}):\left[\left(\nabla_{q}\varphi_{h}(\mathbf{\bq})-\nabla_{p}\varphi_{h}(\mathbf{p})\right)\otimes f_{h}(\mathbf{p})\nabla_{q}f_{h}(\mathbf{q})\right] \,d\bq \,d\bp .
    \end{equation}
    
    \item For relativistic FPL equation, if all basis functions are continuous, then \eqref{eq_cons_unstable} recovers the finite element scheme proposed in \cite{hirvijoki2019conservative}.
    \item For relativistic FPL equation, if we use piecewise constant basis functions on a rectangular mesh in Cartesian coordinate system, then the discrete gradient operator can be explicitly written as 
    \begin{equation*}
        \nabla_{h,p}\varphi_{h}(\bp)=\left[\frac{\varphi_{h}^{i+1,j,k}-\varphi_{h}^{i,j,k}}{\Delta p},\frac{\varphi_{h}^{i,j+1,k}-\varphi_{h}^{i,j,k}}{\Delta p},\frac{\varphi_{h}^{i,j,k+1}-\varphi_{h}^{i,j,k}}{\Delta p}\right]
    \end{equation*}
    where $\Delta p$ is the mesh size and $i,j,k$ are the mesh indices. Our scheme \eqref{eq_cons_unstable} is then equivalent to the finite difference scheme proposed in \cite{shiroto2019structure}.
\end{itemize}

\subsection{Upwind Structure-Preserving Scheme}
As have been shown in Section {\ref{sec:background}}, the FPL equation can be regarded as a nonlinear nonlocal advection-diffusion equation. It is well-known that advection terms can cause numerical instability or sub-optimal convergence rate, unless the scheme is stabilized. 

Let us analyze the symmetric structure-preserving scheme from this new perspective. Note that \eqref{eq_cons_unstable} can be rephrased as follows:

\begin{equation}\label{eq:sp_au_form}
    \sum_{R}\left(\partial_{t}f_{h},\varphi_{h}\right)_{R}+\sum_{R}\left(\nabla_{h,p}f_{h},\mathcal{D}_{h}\cdot\nabla_{h,p}\varphi_{h}\right)_{R}=\sum_{R}\left(\mathcal{U}_{h}f_{h},\nabla_{h,p}\varphi_{h}\right)_{R}
\end{equation}
where $\mathcal{D}_{h}(\mathbf{p})=\int_{\Omega}\Phi_{h}(\mathbf{p},\mathbf{q})f_{h}(\mathbf{q}) \,d\bq $ and $\mathcal{U}_{h}(\mathbf{p})=\int_{\Omega}\Phi_{h}(\mathbf{p},\mathbf{q})\cdot\nabla_{h,q}f_{h}(\mathbf{q}) \,d\bq $.

Recall that the discrete gradient operator can be expressed as follows,
\begin{equation}
    \nabla_{h}\varphi_{h}=\nabla\varphi_{h}+r([[\varphi_{h}]]),
\end{equation}
therefore the advection part in Equation (\ref{eq:sp_au_form}) is apparently not upwind. Nevertheless, comparing it with the upwind scheme in Equation(\ref{semid_upwind}), the only difference comes from $[[\varphi_{h}]]$, the jump of test function across element interfaces. That means if mass, momentum and energy are continuous functions on $\Omega$, then upwind will not interfere with conservation. Based on this idea, we propose the following upwind structure-preserving method.

\begin{theorem}[Upwind Structure-Preserving Scheme]\label{thm:usp}
    Suppose there exists a non-zero projection $\Pi_{h}$ into $\mathcal{W}_{h}\cap C^{1}(\Omega)$. 
    Let $f_{h}(\mathbf{p},t)$ be a solution to the following semi-discrete weak form,
    \begin{equation}\label{eq:semid_upwind_sp}
    \sum_{R}\left(\partial_{t}f_{h},\varphi_{h}\right)_{R}+\sum_{R}\left(\nabla_{h}f_{h},\mathcal{D}_{h}\cdot\nabla_{h}\varphi_{h}\right)_{R}=\sum_{R}\left(\mathcal{U}_{h}f_{h},\nabla\varphi_{h}\right)_{R}-\sum_{R}\langle\widehat{\widehat{\mathcal{U}_{h}f_{h}}},\varphi_{h}^{-}\mathbf{n}^{-}\rangle_{\partial R/\Gamma}
    \end{equation}
    where $\mathcal{D}_{h}(\mathbf{p})=\int_{\Omega}\Phi_{h}(\mathbf{p},\mathbf{q})f_{h}(\mathbf{q})d \bq$ and $\mathcal{U}_{h}(\mathbf{p})=\int_{\Omega}\Phi_{h}(\mathbf{p},\mathbf{q})\cdot\nabla_{h,q}f_{h}(\mathbf{q}) d \bq$ and the kernel $\Phi_{h}$ is defined as in Equation (\ref{sp_kernel}). Then the following discrete conservation laws hold:
    \begin{equation}
        \begin{split}
            \partial_{t}\left(f_{h},\Pi_{h}1\right)_{\Omega}&=0\\
            \partial_{t}\left(f_{h},\Pi_{h}\mathbf{p}\right)_{\Omega}&=0\\
            \partial_{t}\left(f_{h},\Pi_{h}\mathcal{E}(\mathbf{p})\right)_{\Omega}&=0.
        \end{split}
    \end{equation}
    Moreover, the $L^{2}$ stability is enhanced as follows:
     \begin{equation}\label{eq:energy_id}
        \begin{split}
            &\partial_{t}\left(f_{h},f_{h}\right)_{\Omega}+\left(\nabla_{h}f_{h},\mathcal{D}_{h}\cdot\nabla_{h}f_{h}\right)_{\Omega}+\left(\frac{1}{2}\nabla\cdot\mathcal{U}_{h},f_{h}^{2}\right)_{\Omega}-\langle\frac{1}{2}\mathcal{U}_{h}\cdot\mathbf{n}^{-},(f_{h}^{-})^{2}\rangle_{\Gamma}\\
            =&-\sum_{l_{e}\in\partial R/\Gamma}\langle\frac{|\mathcal{U}_{h}\cdot\mathbf{n}|}{2}[[f_{h}]],[[f_{h}]]\rangle_{l_{e}}\\
            \leq&0.
        \end{split}
    \end{equation}
\end{theorem}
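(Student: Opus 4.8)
The plan is to establish the two assertions separately: first the three discrete conservation laws, then the energy identity \eqref{eq:energy_id}. The unifying observation is that the only difference between the upwind scheme \eqref{eq:semid_upwind_sp} and the symmetric scheme \eqref{eq:sp_au_form} lives in the jump contributions $r([[\varphi_h]])$ to $\nabla_h\varphi_h$ (Proposition \ref{u_equiv_d}) together with the interface flux terms $\langle\widehat{\widehat{\mathcal{U}_h f_h}},\varphi_h^-\mathbf{n}^-\rangle_{\partial R/\Gamma}$. Since the hypothesis supplies a projection $\Pi_h$ landing in $\mathcal{W}_h\cap C^1(\Omega)$, the three test functions $\Pi_h 1,\Pi_h\mathbf{p},\Pi_h\mathcal{E}$ are continuous, so $[[\varphi_h]]=0$; I will exploit this to collapse the upwind scheme onto the symmetric one for precisely these test functions.

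For the conservation laws I would argue as follows. When $\varphi_h\in C^1(\Omega)$ one has $[[\varphi_h]]=0$, hence $\nabla_h\varphi_h=\nabla\varphi_h$ by Proposition \ref{u_equiv_d}. Moreover, on any interior face the numerical flux $\widehat{\widehat{\mathcal{U}_h f_h}}$ is single-valued, while the two contributions $\langle\widehat{\widehat{\mathcal{U}_h f_h}},\varphi_h^-\mathbf{n}^-\rangle$ from the two adjacent elements carry opposite normals $\mathbf{n}^-+\mathbf{n}^+=0$ and equal (continuous) traces of $\varphi_h$, so they cancel and the whole face sum over $\partial R/\Gamma$ vanishes. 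Thus for continuous $\varphi_h$ the scheme \eqref{eq:semid_upwind_sp} reduces exactly to \eqref{eq:sp_au_form}, equivalently to the weak form \eqref{eq_cons_unstable}, and the conservation statements become those of the symmetric structure-preserving scheme, so they follow from Theorem \ref{thm:sp_not_upwind}: choosing $\varphi_h=\Pi_h\mathcal{E}$ makes the integrand proportional to $\mathbb{S}(\mathbf{v},\mathbf{w})\cdot(\mathbf{v}-\mathbf{w})$ with $\mathbf{v}=\nabla_{h,p}\Pi_h\mathcal{E}$ and $\mathbf{w}=\nabla_{h,q}\Pi_h\mathcal{E}$, which vanishes by \eqref{eq:tensorS}, while mass and momentum follow because $\Pi_h 1$ and $\Pi_h\mathbf{p}$ reproduce $1$ and $\mathbf{p}$, so $\nabla_{h,p}\varphi_h$ is constant and $\nabla_{h,q}\varphi_h-\nabla_{h,p}\varphi_h=0$.

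For the energy identity I would test \eqref{eq:semid_upwind_sp} with $\varphi_h=f_h$. The time term gives $(\partial_t f_h,f_h)_\Omega=\tfrac12\partial_t(f_h,f_h)_\Omega$ and the diffusion term stays as the symmetric form $(\nabla_h f_h,\mathcal{D}_h\cdot\nabla_h f_h)_\Omega$. The substance is the advection part. On each element I rewrite $\mathcal{U}_h f_h\cdot\nabla f_h=\tfrac12\,\mathcal{U}_h\cdot\nabla(f_h^2)$ and integrate by parts, producing the volume term $-\tfrac12(\nabla\cdot\mathcal{U}_h,f_h^2)_\Omega$ together with element-boundary integrals $\tfrac12\langle\mathcal{U}_h\cdot\mathbf{n}^-,(f_h^-)^2\rangle_{\partial R}$. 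I then pair these boundary integrals against the upwind flux terms by using the upwind representation $\widehat{\widehat{\mathcal{U}_h f_h}}\cdot\mathbf{n}=\{\mathcal{U}_h\cdot\mathbf{n}\}\{f_h\}+\tfrac{|\mathcal{U}_h\cdot\mathbf{n}|}{2}(f_h^--f_h^+)$. On every interior face the average cross-terms cancel, leaving only the jump-square $-\langle\tfrac{|\mathcal{U}_h\cdot\mathbf{n}|}{2}[[f_h]],[[f_h]]\rangle_{l_e}$, while on $\Gamma$ the boundary integral $-\langle\tfrac12\mathcal{U}_h\cdot\mathbf{n}^-,(f_h^-)^2\rangle_\Gamma$ survives; collecting everything yields \eqref{eq:energy_id}, whose right-hand side is manifestly $\leq 0$ because $|\mathcal{U}_h\cdot\mathbf{n}|\geq 0$.

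The main obstacle is the interface bookkeeping in the advection step: correctly summing the two one-sided boundary integrals coming from elementwise integration by parts and matching them with the single-valued upwind flux so that the averaged terms cancel and only the nonnegative jump dissipation remains. A point that makes this tractable, and worth isolating, is that the same $C^1$ regularity of $\Pi_h$ used for conservation forces $\nabla_{h,p}\Pi_h\mathcal{E}=\nabla\Pi_h\mathcal{E}$ to be continuous; consequently $\mathcal{U}_h(\mathbf{p})=\int_\Omega\Phi_h(\mathbf{p},\mathbf{q})\cdot\nabla_{h,q}f_h(\mathbf{q})\,d\mathbf{q}$ is continuous in $\mathbf{p}$, so $\mathcal{U}_h\cdot\mathbf{n}$ is single-valued on each face---exactly the property needed for the cancellation and for the jump term to take the clean form appearing in \eqref{eq:energy_id}.
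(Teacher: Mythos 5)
Your proposal is correct and follows essentially the same route as the paper: for the conservation laws you use the continuity of $\Pi_h 1,\Pi_h\mathbf{p},\Pi_h\mathcal{E}$ to kill the lift $r([[\varphi_h]])$ and the interface flux sums, thereby collapsing \eqref{eq:semid_upwind_sp} onto the symmetric scheme and invoking Theorem \ref{thm:sp_not_upwind}, which is exactly the paper's decomposition \eqref{eq:semid_upwind_sp_jump} with vanishing jump terms. For the $L^2$ identity you actually supply the elementwise integration by parts and upwind-flux splitting that the paper compresses into one sentence; note that your (correct) bookkeeping produces $\tfrac12\partial_t\left(f_h,f_h\right)_\Omega$ rather than $\partial_t\left(f_h,f_h\right)_\Omega$, so the time-derivative term in \eqref{eq:energy_id} as printed appears to be missing a factor of $\tfrac12$, while the sign conclusion is unaffected.
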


\begin{proof}
    Since the discrete energy $\Pi_{h}\mathcal{E}(\mathbf{p})$ belongs to $\mathcal{W}_{h}\cap C^{1}(\Omega)$, its discrete gradient must be continuous, i.e. $\nabla_{h,p}\Pi_{h}\mathcal{E}(\mathbf{p})\in C^{0}(\Omega)$. Consequently the tensor field $\mathbb{S}\left(\nabla_{h,p}\Pi_{h}\mathcal{E}(\mathbf{p}),\nabla_{h,q}\Pi_{h}\mathcal{E}(\mathbf{q})\right)$, as a composition of continuous functions, is continuous in both $\mathbf{p}$ and $\mathbf{q}$. Recall the definition of advection field $\mathcal{U}_{h}(\mathbf{p})$:
    \begin{equation}
        \mathcal{U}_{h}(\mathbf{p})=\int_{\Omega}\Phi_{h}(\mathbf{p},\mathbf{q})\cdot\nabla_{h,q}f_{h}(\mathbf{q}) d\bq=\int_{\Omega}\Lambda(\mathbf{p},\mathbf{q})\mathbb{S}\left(\nabla_{h,p}\Pi_{h}\mathcal{E}(\mathbf{p}),\nabla_{h,q}\Pi_{h}\mathcal{E}(\mathbf{q})\right)\cdot\nabla_{h,q}f_{h}(\mathbf{q}) d\bq
    \end{equation}
    It follows that $\mathcal{U}_{h}(\mathbf{p})\in C^{0}(\Omega)$. Hence the upwind flux
    \begin{equation}\label{upwindflux}
        \widehat{\widehat{\mathcal{U}_{h}f_{h}}}\cdot\mathbf{n}^{-}=\begin{cases}
\mathcal{U}_{h}f_{h}^{-}\cdot\mathbf{n}^{-}, & \mathcal{U}_{h}\cdot\mathbf{n}^{-}>0\\
\mathcal{U}_{h}f_{h}^{+}\cdot\mathbf{n}^{-}, & \mathcal{U}_{h}\cdot\mathbf{n}^{-}<0
\end{cases}
    \end{equation}
    is well-defined. Substitute $\varphi_{h}$ with $f_{h}$ in Equation (\ref{eq:semid_upwind_sp}), we obtain Equation (\ref{eq:energy_id}).

    Now let us consider the conservation laws. Recall that for any test function $\varphi_{h} \in \mathcal{W}_{h}$, its discrete gradient consists of the interior gradient part and the interface jump part:
    \begin{equation}
        \nabla_{h}\varphi_{h}=\nabla\varphi_{h}+r([[\varphi_{h}]])
    \end{equation}
    Hence the semi-discrete form (\ref{eq:semid_upwind_sp}) is equivalent to
     \begin{equation}\label{eq:semid_upwind_sp_jump}
        \begin{split}
            \partial_{t}\left(f_{h},\varphi_{h}\right)_{\Omega}=&-\int_{\Omega}\int_{\Omega}\Phi_{h}(\mathbf{p},\mathbf{q}):\left[\left(\nabla_{h,q}\varphi_{h}(\mathbf{q})-\nabla_{h,p}\varphi_{h}(\mathbf{p})\right)\otimes f_{h}(\mathbf{p})\nabla_{h,q}f_{h}(\mathbf{q})\right] d\bq d\bp \\
            &+\left\{ \left(\mathcal{U}_{h}f_{h},r([[\varphi_{h}]])\right)_{\Omega}-\sum_{l_{e}\in\partial R/\Gamma}\langle\mathcal{U}_{h}\{f_{h}\},[[\varphi_{h}]]\rangle_{l_{e}}-\sum_{l_{e}\in\partial R/\Gamma}\langle\frac{|\mathcal{U}_{h}\cdot\mathbf{n}|}{2}[[f_{h}]],[[\varphi_{h}]]\rangle_{l_{e}}\right\}
        \end{split}
    \end{equation}
    As have been proved in Theorem \ref{thm:sp_not_upwind}, the right hand side of the first row is equal to zero if $\varphi_{h} \in \{1,\Pi_{h}\mathbf{p},\Pi_{h}\mathcal{E}(\mathbf{p})\}$. Moreover, since $\{1,\Pi_{h}\mathbf{p},\Pi_{h}\mathcal{E}(\mathbf{p})\} \subset \mathcal{W}_{h}\cap C^{1}(\Omega)$, the jump terms in the second row will also vanish. In conclusion, if $\varphi_{h} \in \{1,\Pi_{h}\mathbf{p},\Pi_{h}\mathcal{E}(\mathbf{p})\}$,
     \begin{equation}
        \partial_{t}\left(f_{h},\varphi_{h}\right)_{\Omega}=0.
    \end{equation}
    
\end{proof}

\begin{remark}
The jump terms vanish as long as $\{1,\Pi_{h}\mathbf{p},\Pi_{h}\mathcal{E}(\mathbf{p})\} \subset C^{0}(\Omega)$, however we require $\Pi_{h}\mathcal{E}(\mathbf{p}) \in C^{1}(\Omega)$ to ensure the continuity of the advection field, which means the degree of polynomials $k$ should be at least $2$ in the test space \eqref{eq:discrete_test_space}. Of course, there may also be upwind numerical fluxes for discontinuous advection vector field. That is beyond the scope of this paper, and requires further investigation.
\end{remark}

\begin{remark}
It can be easily verified that our scheme is still locally conservative, since it can be regarded as traditional LDG applied to advection-diffusion equation with specially designed nonlinear nonlocal coefficients. 
\end{remark}

\section{Implementation}\label{sec:implement}
\subsection{Evaluation and Complexity}

As we know, the FPL collision operator is a bilinear operator, thus its weak form, as shown in Equation (\ref{eq:weaklandau_structure}), is a trilinear form:
\begin{equation*}
    \left(\partial_{t}f,\varphi\right)=\mathcal{B}(f,f,\varphi),
\end{equation*}
where
\begin{equation*}
    \mathcal{B}(f,g,\varphi)\coloneqq-\int_{\Omega}\int_{\Omega}\Phi(\bp,\bq):\left[\left(\nabla_{q}\varphi(\bq)-\nabla_{p}\varphi(\bp)\right)\otimes f(\bp)\nabla_{q}g(\bq)\right] d\bq d\bp.
\end{equation*}

According to Equation (\ref{eq:semid_upwind_sp}) and (\ref{eq:semid_upwind_sp_jump}), its upwind structure-preserving discretization $\mathcal{B}_{h}$ can be defined as:
\begin{equation}\label{eq:Bh}
    \begin{split}&\mathcal{B}_{h}\left(f_{h},g_{h},\varphi_{h}\right)\\
    \coloneqq& -\sum_{R}\left(\nabla_{h}g_{h},\mathcal{D}_{h}[f_{h}]\cdot\nabla_{h}\varphi_{h}\right)_{R}+\sum_{R}\left(\mathcal{U}_{h}[\nabla_{h}g_{h}]f_{h},\nabla\varphi_{h}\right)_{R}-\sum_{R}\langle\widehat{\widehat{\mathcal{U}_{h}[\nabla_{h}g_{h}]f_{h}}},\varphi_{h}^{-}\mathbf{n}^{-}\rangle_{\partial R/\Gamma}\\
    = & -\iint_{pq}\Phi_{h}(\mathbf{p},\mathbf{q}):\left[\left(\nabla_{h,q}\varphi_{h}(\mathbf{q})-\nabla_{h,p}\varphi_{h}(\mathbf{p})\right)\otimes f_{h}(\mathbf{p})\nabla_{h,q}g_{h}(\mathbf{q})\right]\\
 & +\{ \left(\mathcal{U}_{h}[\nabla_{h}g_{h}]f_{h},r([[\varphi_{h}]])\right)_{\Omega}-\sum_{l_{e}\in\partial R/\Gamma}\langle\mathcal{U}_{h}[\nabla_{h}g_{h}]\{f_{h}\},[[\varphi_{h}]]\rangle_{l_{e}}\\
 & -\sum_{l_{e}\in\partial R/\Gamma}\langle\frac{|\mathcal{U}_{h}[\nabla_{h}g_{h}]\cdot\mathbf{n}|}{2}[[f_{h}]],[[\varphi_{h}]]\rangle_{l_{e}}
\end{split}
\end{equation}

Suppose that the test space $\mathcal{W}_{h}$ is spanned by basis functions $\varphi_{i}$, then the solution $f_{h}$ can be expressed as follows:
\begin{equation}
    f_{h}(\mathbf{p},t)=\sum_{m=1}^{N}a_{m}(t)\varphi_{m}(\mathbf{p}).
\end{equation}

Note that as a consequence of upwinding, the discrete form $\mathcal{B}_{h}$ now contains a maximum function and an absolute value in the last row of \eqref{eq:Bh}, hence it is no longer linear w.r.t the function $g_{h}$. Therefore, during implementation, one can no longer precompute $B_{ijk} = \mathcal{B}_{h}\left(\varphi_{i},\varphi_{j},\varphi_{k}\right)$ and solve the ode system $\sum_{m}\left(\varphi_{m},\varphi_{k}\right)_{\Omega}d_{t}a_{m}(t)=\sum_{i,j}B_{ijk}a_{i}(t)a_{j}(t)$. Instead, we propose the following procedure:

\begin{enumerate}
    \item Given $f^{0}_{h}$, evaluate the fields $\mathcal{D}_{h}[f^{0}_{h}]$ and $\mathcal{U}_{h}[\nabla_{h} f^{0}_{h}]$ on Gauss-Lobatto quadrature points.
    \item Given $\Delta t$, solve Equation (\ref{eq:semid_upwind_sp}) element by element and obtain $f^{1}_{h}$.
    \item Let $f^{0}_{h}=f^{1}_{h}$ and repeat the above steps.
\end{enumerate}

Suppose that in each dimension we have $O(n)$ meshes, then the total number of elements and the degree of freedom $N$ are both in the order of $O(n^{3})$. In step 1, note that both $\mathcal{D}_{h}$ and $\mathcal{U}_{h}$ are linear maps from the the $N$-dimensional test space $\mathcal{W}_{h}$ to $\mathbb{R}^{N_{GL}}$, where the number of quadrature points $N_{GL}=O(n^{3})$, hence the time complexity for step 1 is $O(n^{6})$. Step 2 is just the normal time advancing step for DG methods, whose time complexity is known to be $O(n^{3})$.

\begin{remark}
    The space complexity for storing an $O(n^{3})\times O(n^{3})$ linear map is $O(n^{6})$. But in Section \ref{sec:numerics} we will show that for non-relativistic FPL equation, it can be reduced to $O(n^{3})$.
\end{remark}






\subsection{Temporal Discretizations}\label{ssec:temportal_discretization}
For the sake of numerical stability, we tend to use a high order Runge-Kutta scheme. As we know, RK schemes are a combination of Euler schemes, hence to begin with, we discuss the Euler schemes that preserve conservation laws. 

Analogous to \cite{huang2023conservative}, we have the following choices:

\begin{equation}
    \frac{1}{\Delta t}\left(f_{h}^{n+1}-f_{h}^{n},\varphi_{h}\right)=\begin{cases}
\mathcal{B}_{h}\left(f_{h}^{n+1},f_{h}^{n+1},\varphi_{h}\right), & \text{fully implicit},\\
\mathcal{B}_{h}\left(f_{h}^{n},f_{h}^{n+1},\varphi_{h}\right), & \text{semi-implicit},\\
\mathcal{B}_{h}\left(f_{h}^{n+1},f_{h}^{n},\varphi_{h}\right), & \text{semi-implicit},\\
\mathcal{B}_{h}\left(f_{h}^{n},f_{h}^{n},\varphi_{h}\right), & \text{fully explicit}.
\end{cases}
\end{equation}
The first two rows are not linear w.r.t $f^{n+1}$, therefore they are too expensive to be solved numerically. The third row seems favorable, however, recall the definition of semi-discrete weak form in Equation (\ref{eq:Bh}), we notice that it contains $\left(\nabla_{h}f_{h}^{n},\mathcal{D}_{h}[f_{h}^{n+1}]\cdot\nabla_{h}\varphi_{h}\right)$. Therefore, when analyzed as a scheme for advection-diffusion equation, it is explicit for the diffusion part but as expensive as an implicit scheme. In conclusion, our only choice is the fully explict scheme. Based on that, we choose the following RK3 scheme\cite{wanner1996solving}:
\begin{equation}
    \begin{split}
        \frac{1}{\Delta t}\left(w_{h}^{n}-f_{h}^{n},\varphi_{h}\right)&=\mathcal{B}_{h}\left(f_{h}^{n},f_{h}^{n},\varphi_{h}\right)\\
        \frac{1}{\Delta t}\left(y_{h}^{n}-\frac{1}{2}\left(f_{h}^{n}+w_{h}^{n}\right),\varphi_{h}\right)&=\mathcal{B}_{h}\left(w_{h}^{n},w_{h}^{n},\varphi_{h}\right)\\
        \frac{1}{\Delta t}\left(f_{h}^{n+1}-\frac{1}{3}\left(f_{h}^{n}+w_{h}^{n}+y_{h}^{n}\right),\varphi_{h}\right)&=\mathcal{B}_{h}\left(y_{h}^{n},y_{h}^{n},\varphi_{h}\right)
    \end{split}
\end{equation}


\section{Numerical Experiments}\label{sec:numerics}

The proposed upwind structure-preserving LDG scheme is a general framework for both relativistic and non-relativistic FPL equation, with any type of interaction potential. For simplicity, we choose the non-relativistic version as a test example. As will be shown in the following subsection, the space complexity can be reduced to $O(n^{3})$ instead of $O(n^{6})$, thanks to the convolution structure of the non-relativistic operator.

\subsection{The convolution structure of non-relativistic FPL operator}
Recall the non-relativistic FPL operator,
\begin{equation*}
		\mathcal{C}_{L}(f,f)=\nabla_{p}\cdot\left(\int_{\R^3}\Phi(\bp,\bq)\cdot\left(f(\bq)\nabla_{p}f(\bp)-f(\bp)\nabla_{q}f(\bq)\right)\,d\bq \right),
\end{equation*}
where
\begin{equation*}
    \begin{split}
        \Phi(\mathbf{p},\mathbf{q}) &= \Lambda(\mathbf{p},\mathbf{q})\mathbb{S}(\mathbf{p},\mathbf{q}),\\
        \Lambda(\mathbf{p},\mathbf{q}) &= \lvert\mathbf{p}-\mathbf{q}\rvert^{\gamma},\\
        \mathbb{S}(\mathbf{p},\mathbf{q})&=
        (\mathbf{p}-\mathbf{q})^2 - (\mathbf{p}-\mathbf{q})\otimes (\mathbf{p}-\mathbf{q}).
    \end{split}
\end{equation*}

Since the non-relativistic kinetic energy $\mathcal{E}=\mathbf{p}^{2}/2 \in \mathcal{W}_{h}$, we have
\begin{equation*}
    \Phi_{h}(\mathbf{p},\mathbf{q}) = \Phi(\mathbf{p},\mathbf{q}) = K(\mathbf{p}-\mathbf{q}).
\end{equation*}
Consequently
\begin{equation*}
    \mathcal{D}_{h}(\mathbf{p})=\int_{\Omega}K(\mathbf{p}-\mathbf{q})f_{h}(\mathbf{q}) \,d\bq ,
\end{equation*}
and 
\begin{equation*}
    \mathcal{U}_{h}(\mathbf{p})=\int_{\Omega}K(\mathbf{p}-\mathbf{q})\cdot\nabla_{h,q}f_{h}(\mathbf{q}) \,d\bq .
\end{equation*}

Since the solution $f_{h}$ and its discrete gradient $\nabla_{h}f_{h}$ are both linear combinations of basis functions $\varphi_{i}$'s, it is sufficient to precompute only the following matrix:
\begin{equation*}
    K_{i,j} = \int_{\Omega}K(\mathbf{p}_{i}-\mathbf{q}) \varphi_{j}(\mathbf{q})\,d\bq .
\end{equation*}

Although $K_{i,j}$ is a matrix of $O(n^{3})\times O(n^{3})$, we do not have to store all of the elements thanks to the convolution structure. Indeed, suppose that $\varphi_{j'}(\mathbf{q} - \mathbf{p}_{i'}) = \varphi_{j}(\mathbf{q} - \mathbf{p}_{i})$, it can be easily verified that
\begin{equation*}
    K_{i',j'} = K_{i,j}.
\end{equation*}
Consequently, when using uniform mesh, the actual space complexity for storing $K_{i,j}$ is in the order of $O(n^{3})$.

\subsection{Numerical Results}
We set the cutoff domain as $\Omega = (-4,4)^{3}$. The numerical experiment is performed on a $8\times8\times8$ uniform mesh. Let $\mathbb{Q}_{2} = \text{span}\{x^{i}y^{j}z^{k}\vert 0\leq i,j,k\leq2\}$, we take $\mathcal{W}_h=\left\{g\in L^2(\Omega): \left.g\right|_{R}\in\Q^{2}(R), \forall R\in\partition\right\}$ as our test space.  

Consider the following initial condition:
\begin{equation*}
    \left.f(p_{x}, p_{y}, p_{z},t)\right\vert_{t=0} = e^{-((p_{x}-1)^2 + p_{y}^2 + p_{z}^2)} + e^{-((p_{x}+1)^2 + p_{y}^2 + p_{z}^2)},
\end{equation*}
we solve the equation with Maxwell type interaction, i.e.
\begin{equation*}
    \Phi(\mathbf{p},\mathbf{q}) = (\mathbf{p}-\mathbf{q})^2 - (\mathbf{p}-\mathbf{q})\otimes (\mathbf{p}-\mathbf{q}).
\end{equation*}

The time evolution of the \textit{pdf} cross-section $\left.f(p_{x},p_{y},p_{z},t)\right\vert_{p_{z}=0}$ is shown in Figure \ref{fig:fevol}.

\begin{figure}[htbp!]
    \centering
    \begin{subfigure}[b]{0.45\textwidth}
        \centering
        \includegraphics[width=\textwidth]{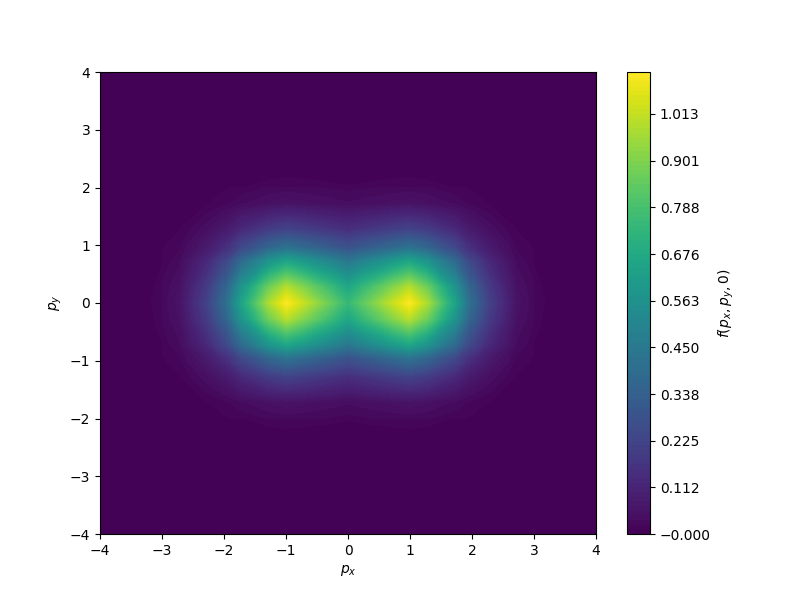}
        \caption{$f$ at $t = 0$}
    \end{subfigure}
    \begin{subfigure}[b]{0.45\textwidth}
        \centering
        \includegraphics[width=\textwidth]{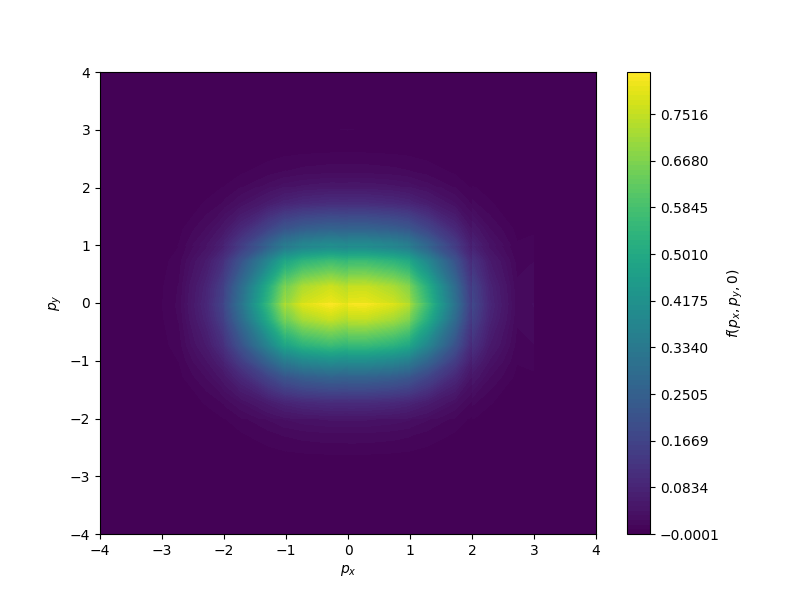}
        \caption{$f$ at $t = 0.013$}
    \end{subfigure}
    \begin{subfigure}[b]{0.45\textwidth}
        \centering
        \includegraphics[width=\textwidth]{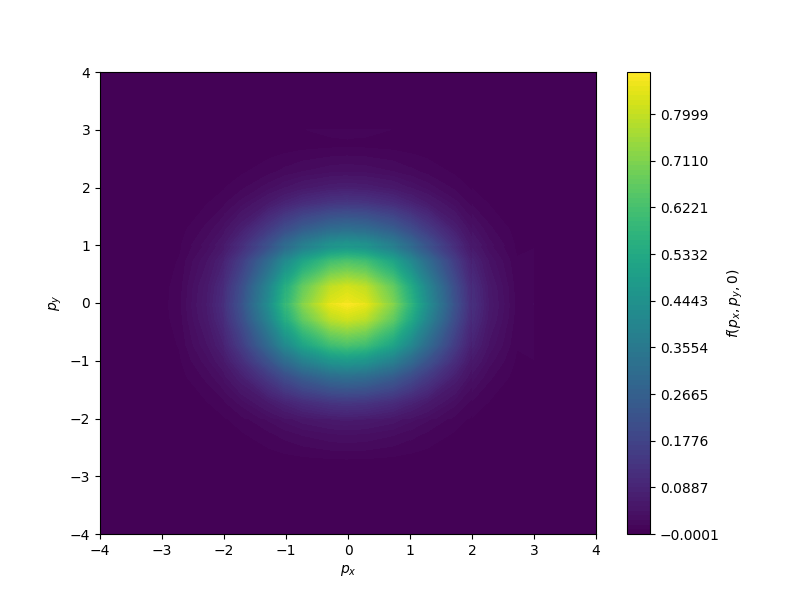}
        \caption{$f$ at $t = 0.027$}
    \end{subfigure}
    \begin{subfigure}[b]{0.45\textwidth}
        \centering
        \includegraphics[width=\textwidth]{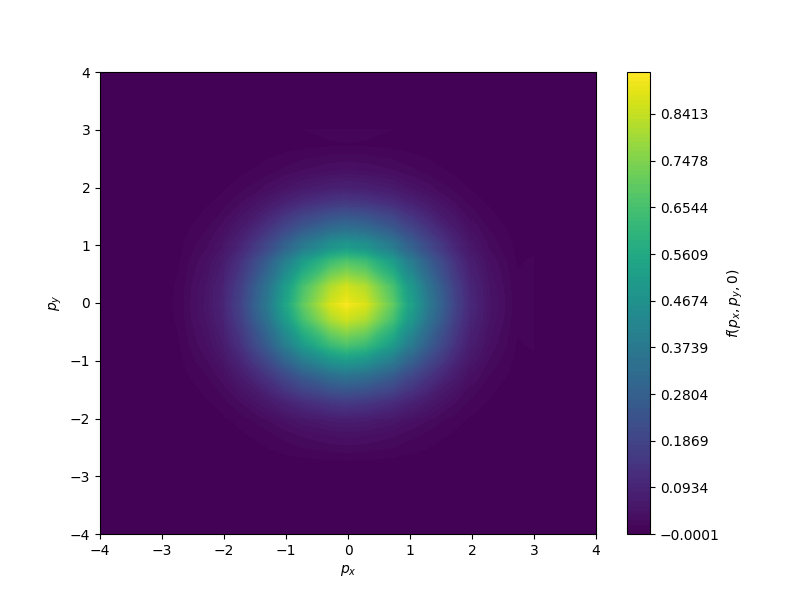}
        \caption{$f$ at $t = 0.041$}
    \end{subfigure}
    \caption{Cross section plot of $f(\mathbf{p},t)$ at $p_{z}=0$ in different time. The simulation was performed on a $8\times8\times8$ mesh with $\mathbb{Q}^{2}$ basis functions.}
    \label{fig:fevol}
\end{figure}

In Figure \ref{fig:moments} we also show the decay of relative entropy
\begin{equation*}
    \mathcal{H}(f\vert \mathcal{M}) = \int f^{+}\ln f^{+} - \int\mathcal{M}\ln \mathcal{M},
\end{equation*}
where the Maxwellian distribution $\mathcal{M}$ has the same mass, momentum and energy as the initial condition $\left.f(\mathbf{p},t)\right\vert_{t=0}$. Note that the scheme is not positive-preserving, so we use $f^{+}=\max(f,0)$ instead of $f$. The conservation errors are also shown in Figure \ref{fig:moments}.

\begin{figure}[htbp!]
    \centering
    \begin{subfigure}[b]{0.45\textwidth}
        \centering
        \includegraphics[width=\textwidth]{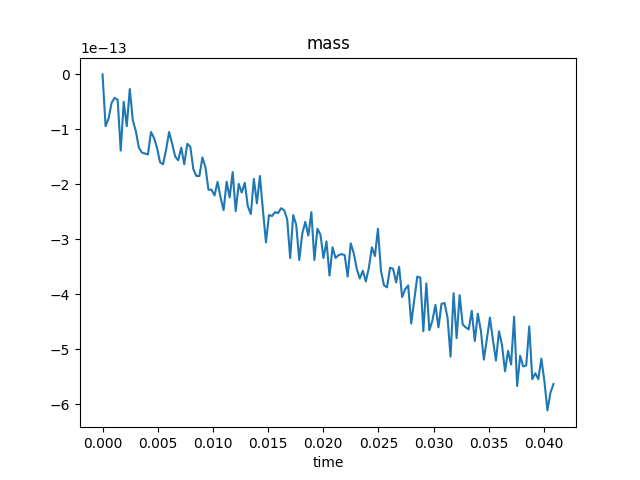}
    \end{subfigure}
    \begin{subfigure}[b]{0.45\textwidth}
        \centering
        \includegraphics[width=\textwidth]{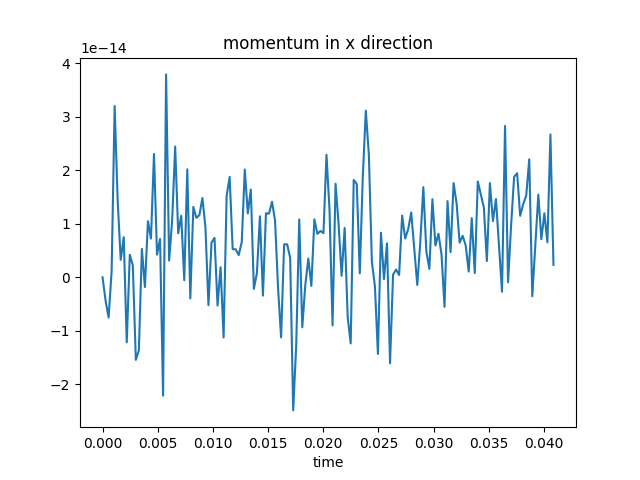}
    \end{subfigure}
    \begin{subfigure}[b]{0.45\textwidth}
        \centering
        \includegraphics[width=\textwidth]{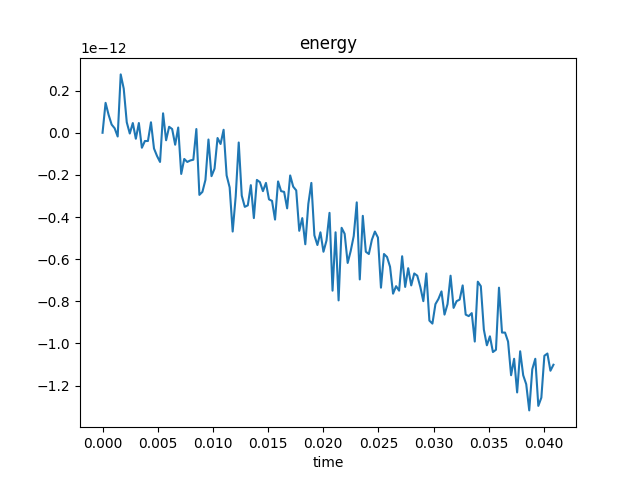}
    \end{subfigure}
    \begin{subfigure}[b]{0.45\textwidth}
        \centering
        \includegraphics[width=\textwidth]{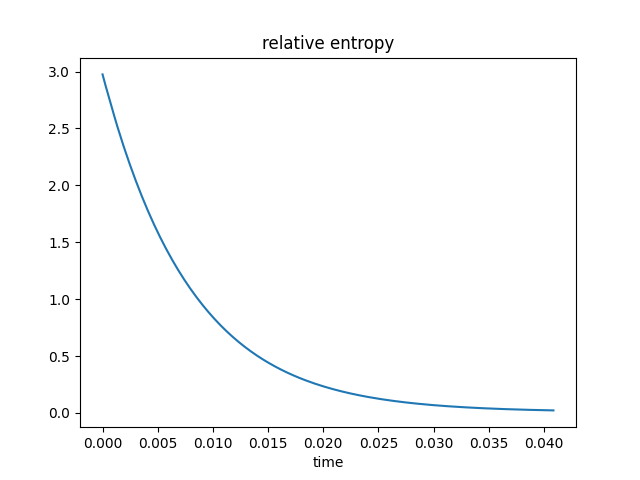}
    \end{subfigure}
    \caption{Conservation errors and the decay of entropy. The simulation was performed on a $8\times8\times8$ mesh with $\mathbb{Q}^{2}$ basis functions.}
    \label{fig:moments}
\end{figure}

\section{Concluding Remarks}\label{sec:conclude}
In this work, we developed a structure-preserving local discontinuous Galerkin (LDG) method for the Fokker–Planck–Landau (FPL) equation. By introducing a discrete gradient operator and carefully designing the collision kernel using projections of the particle energy, we constructed a scheme that simultaneously satisfies discrete conservation of mass, momentum, and energy. Unlike previous conservative discontinuous Galerkin methods, our approach ensures that the underlying geometric structure of the collision operator—specifically the null space of the tensor kernel—is preserved at the discrete level.

To resolve the contradiction between stability and conservation, we proposed an upwind structure-preserving scheme that exploits the unique treatment of jump terms in the LDG framework. The resulting method exhibits enhanced $L^2$ stability and is amenable to high-order discretizations and parallel implementation.

We validated our approach through numerical experiments on the non-relativistic FPL equation, confirming that the scheme accurately reproduces key physical behaviors such as entropy decay and convergence to equilibrium, while preserving all relevant conservation laws to machine precision.

In future work, we plan to apply this framework to the relativistic FPL equation, explore adaptive mesh refinement, and investigate coupling with Vlasov solvers in high-dimensional phase space.

\section{Acknowledgements}
The authors thank and gratefully acknowledge the support from the Oden Institute of Computational Engineering and Sciences, the Institute for Fusion Studies and the University of Texas Austin. The authors acknowledge the Texas Advanced Computing Center (TACC) at The University of Texas at Austin for providing computational resources that have contributed to the research results reported within this paper.

\bibliographystyle{plain}
\bibliography{main.bib}
\end{document}